\title{Classifying bi-invariant 2-forms on infinite-dimensional Lie groups%
}
\author{David Michael Roberts\thanks{\href{https://orcid.org/0000-0002-3478-0522}{orcid.org/0000-0002-3478-0522}\\
\url{david.roberts@adelaide.edu.au}\\
\noindent
This document is released under a \href{http://creativecommons.org/publicdomain/zero/1.0/}{CC0 license}.\\
I thank Michael Murray, Alexander Schmeding, John Baez and Thomas Leistner for helpful discussions and suggestions.}}
\date{7 November 2023}
\theoremstyle{definition}
\newtheorem{definition}{Definition}[section]
\newtheorem{remark}[definition]{Remark}
\newtheorem{lemma}[definition]{Lemma}
\newtheorem{theorem}{Theorem}
\newtheorem{proposition}[definition]{Proposition}
\newtheorem{corollary}[definition]{Corollary}
\newtheorem{example}[definition]{Example}
\DeclareMathOperator{\id}{id}
\DeclareMathOperator{\pr}{pr}
\DeclareMathOperator{\Ad}{Ad}
\DeclareMathOperator{\Spin}{Spin}
\def\RR{\mathbb{R}}
\def\ZZ{\mathbb{Z}}
\def\cH{\mathcal{H}}
\def\cK{\mathcal{K}}
\def\PU{\mathsf{PU}}
\def\U{\mathsf{U}}
\def\fracu{\mathfrak{u}}
\def\fracg{\mathfrak{g}}
\def\into{\hookrightarrow}
\def\onto{\twoheadrightarrow}
\begin{document}
\maketitle

\begin{abstract}
A bi-invariant differential 2-form on a Lie group $G$ is a highly constrained object, being determined by purely linear data: an $\Ad$-invariant alternating bilinear form on the Lie algebra of $G$.
On a compact connected Lie group these have an known classification, in terms of de~Rham cohomology, which is here generalised to arbitrary finite-dimensional Lie groups, at the cost of losing the connection to cohomology.
This expanded classification extends further to all Milnor regular infinite-dimensional Lie groups.
I give some examples of (structured) diffeomorphism groups to which the result on bi-invariant forms applies. 
For symplectomorphism and volume-preserving diffeomorphism groups the spaces of bi-invariant 2-forms are finite-dimensional, and related to the de Rham cohomology of the original compact manifold.
In the particular case of the infinite-dimensional projective unitary group $\PU(\cH)$ the classification invalidates an assumption made by Mathai and the author about a certain 2-form on this Banach Lie group.
\end{abstract}

\section{Introduction}

Let $\U=\U(\cH)$ be the unitary group of an infinite-dimensional, separable complex Hilbert space $\cH$, equipped with the norm topology. 
This makes $\U$ a Banach Lie group (eg \cite[\S3]{Schottenloher_18}). The quotient group $\PU = \PU(\cH) = \U(\cH)/U(1)$ is also a Banach Lie group, and $\pi\colon \U\to \PU$ is a principal $U(1)$-bundle and a central extension. 
The characteristic class of this bundle is the generator of $H^2(\PU,\ZZ) = \ZZ$, and further, as $\PU$ is a $K(\ZZ,2)$ and $\U$ is contractible by Kuiper's theorem, this bundle is universal.
A connection on this bundle thus has curvature whose associated cohomology class is (the image in real-valued cohomology of) a generator $c_1$ of $H^2(\PU,\ZZ)$.
Further, this class (the universal first Chern class) is \emph{primitive},\marginnote{Here $\pr_i$ projects \emph{onto} the $i^{th}$ factor, and $m$ is the multiplication map for the group at hand} in the sense that 
\[
    m^*c_1 = \pr_1^*c_1 + \pr_2^*c_1
\]
in $H^2(\PU^2,\ZZ)$.

More generally, one can consider a Lie group $G$ (possibly infinite-dimensional), and a central extension $U(1) \to \widehat{G} \to G$ where the quotient map has smooth local sections.
The first Chern class $c_1(\widehat{G})$ of this circle bundle is primitive in the above sense, and one can consider a connection $A$ whose curvature $F_A$ gives a 2-form representative for (the image in de~Rham cohomology of) $c_1(\widehat{G})$.
The results of \cite{Mur_Stev03} show how the central extension as a Lie group and as a bundle with connection can be reconstructed from the data of the closed ($2\pi i$-)integral 2-form $F_A$ together with a 1-form $\alpha$ on $G\times G$ satisfying 
\begin{equation}\label{eq:Murray-Stevenson_eq}
    m^*F_A + d\alpha = \pr_1^*F_A + \pr_2^*F_A
\end{equation}
and $\delta(\alpha)=0$ on $G^3$, for a specific differential {$\delta\colon \Omega^1(G^2) \to \Omega^1(G^3)$.} 
The equation (\ref{eq:Murray-Stevenson_eq}) then becomes the primitivity condition on passing to de~Rham cohomology.

It is natural to wonder whether it is possible, for a given Lie group $G$ and primitive de~Rham cohomology 2-class on it, to find a 2-form representing that class where the 1-form $\alpha$ vanishes.
In other words, can one lift the primitivity condition to the level of differential forms?

It is a simple result that a primitive differential form $\eta$ is a \emph{bi-invariant} differential form: assuming $\alpha=0$, restricting (\ref{eq:Murray-Stevenson_eq}) to the subspaces $\{g\}\times G$ and $G\times \{g\}$ reduces to $L^*_g\eta = \eta = R^*_g\eta$.
So as a first approximation one can try to understand the existence of bi-invariant 2-forms on a Lie group, before trying to examine its subspace of primitive 2-forms.
The space $\Omega^2_I(G)$ of bi-invariant 2-forms on $G$ is very much smaller than the space $\Omega^2(G)$ of all 2-forms. 
By linearity, the space of primitive differential forms is a subspace of the space of  bi-invariant forms.

In the case of a compact Lie group $G$ the space of bi-invariant forms is already finite-dimensional, and the first result we give here (deduced by assembling known classification results) completely classifies them:

\begin{theorem}
Let $G$ be a connected finite-dimensional Lie group. 
Then there is an isomorphism $\Omega^2_I(G) \xrightarrow{\simeq} \bigwedge^2 \mathfrak{a}^*$, where $\mathfrak{a} = \mathfrak{g}^{ab}$ is the abelianisation of the Lie algebra of $G$. 
If $G$ is compact,\footnote{This result arose in discussion with John Baez around an earlier version of this note.} then $\mathfrak{a}$ is the largest abelian summand of the Lie algebra $\mathfrak{g}$ of $G$.
\end{theorem}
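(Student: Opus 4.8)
The plan is to turn the statement into a purely algebraic question on $\mathfrak{g}$, as the abstract anticipates, and then to resolve that question by a short symmetry argument special to degree two.

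First I would record the standard identification $\Omega^2_I(G)\cong(\bigwedge^2\mathfrak{g}^*)^{\Ad(G)}$: evaluation at the identity sends left-invariant $2$-forms isomorphically to $\bigwedge^2\mathfrak{g}^*$, and such a form is also right-invariant exactly when the corresponding element is fixed by $\Ad(G)$. Since $G$ is connected, an element is $\Ad(G)$-fixed iff it is annihilated by the infinitesimal $\mathfrak{g}$-action, so $\Omega^2_I(G)\cong(\bigwedge^2\mathfrak{g}^*)^{\mathfrak{g}}$, i.e.\ the space of alternating $\omega$ with $\omega([Z,X],Y)+\omega(X,[Z,Y])=0$ for all $X,Y,Z\in\mathfrak{g}$.

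The heart of the proof is the identity $(\bigwedge^2\mathfrak{g}^*)^{\mathfrak{g}}=\bigwedge^2\mathfrak{a}^*$. Given invariant $\omega$, I would introduce $T(X,Y,Z):=\omega([X,Y],Z)$. It is antisymmetric in its first two arguments because the bracket is; and, rewriting the invariance condition via the antisymmetry of $\omega$ as $\omega([Z,X],Y)=\omega([Z,Y],X)$, one sees that invariance says precisely that $T$ is symmetric in its last two arguments. A trilinear form that is antisymmetric in one adjacent pair and symmetric in an overlapping pair vanishes identically: applying the two relations in turn yields $T=-T$. Hence $\omega([X,Y],Z)=0$ for all $X,Y,Z$, so $\omega$ annihilates $[\mathfrak{g},\mathfrak{g}]$ in either slot and descends to $\mathfrak{a}=\mathfrak{g}/[\mathfrak{g},\mathfrak{g}]$. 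The reverse inclusion is immediate, since for a form pulled back along $\mathfrak{g}\onto\mathfrak{a}$ both brackets in the invariance condition land in $[\mathfrak{g},\mathfrak{g}]$ and are killed. I expect this little symmetry lemma to be the real crux, and the point worth stressing is that it is genuinely special to degree two: for $p\geq 3$ the analogous vanishing fails (witness the Cartan $3$-form on a simple $\mathfrak{g}$), which is exactly why the argument sidesteps semisimplicity, closedness and Lie-algebra cohomology and so survives for arbitrary connected $G$.

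For the compact case I would fall back on structure theory: a compact connected $G$ has reductive Lie algebra, so an $\Ad$-invariant inner product gives an orthogonal splitting $\mathfrak{g}=\mathfrak{z}\oplus[\mathfrak{g},\mathfrak{g}]$ with $\mathfrak{z}$ the centre and $[\mathfrak{g},\mathfrak{g}]$ semisimple. The quotient map then restricts to an isomorphism $\mathfrak{z}\xrightarrow{\simeq}\mathfrak{a}$, identifying $\mathfrak{a}$ with the largest abelian direct summand of $\mathfrak{g}$, as claimed. The main obstacle is conceptual rather than computational: the temptation is to route the first part through the usual harmonic or cohomological description of bi-invariant forms, but that only delivers the compact conclusion, so the real work lies in noticing that invariance \emph{alone} (not closedness) already forces factorisation through $\mathfrak{a}$ in degree two.
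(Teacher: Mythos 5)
Your proof is correct, and the crux is genuinely different from the paper's. The paper reaches the key identity $\eta([X,Y],Z)=0$ by combining two facts: bi-invariant forms are $d$-closed, hence give Chevalley--Eilenberg $2$-cocycles satisfying $\eta([X,Y],Z)+\eta([Y,Z],X)+\eta([Z,X],Y)=0$, and differentiating $\Ad$-invariance yields $\eta([Z,X],Y)=-\eta([Y,Z],X)$, which collapses the cocycle identity to $\eta([X,Y],Z)=0$. You instead extract the same identity from infinitesimal invariance alone, via the observation that $T(X,Y,Z)=\omega([X,Y],Z)$ is antisymmetric in $(X,Y)$ and symmetric in $(Y,Z)$ and hence vanishes; closedness of bi-invariant forms is never used, which is a genuine economy. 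You also avoid the paper's reduction to the universal cover and its use of the integrated homomorphism $Q\colon\widetilde{G}\to\mathfrak{a}$ for the converse inclusion: since you characterise $\Ad(G)$-invariance infinitesimally (legitimate, as $G$ is connected and finite-dimensional), the invariance of a form pulled back from $\mathfrak{a}$ is a one-line check at the Lie algebra level. What the paper's heavier scaffolding buys is reusability: the covering reduction, the integration of $q$ to $Q$ via Lie's second theorem, and the cocycle identity are carried over verbatim to the infinite-dimensional Theorem~\ref{thm:main}, where the equivalence of infinitesimal and global invariance and the existence of $Q$ are not free and require the Milnor regularity hypothesis. For the compact addendum, your reductive splitting $\mathfrak{g}=\mathfrak{z}\oplus[\mathfrak{g},\mathfrak{g}]$ via an invariant inner product and the paper's appeal to the finite-cover structure theorem deliver the same conclusion by essentially equivalent means.
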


As a corollary, the space of bi-invariant 2-forms vanishes if and only if $[\mathfrak{g},\mathfrak{g}]\subset \mathfrak{g}$ is of codimension $\leq1$, or equivalently $\dim \mathfrak{g}^{ab} \leq 1$.

Regardless of compactness, this focuses our attention on looking at what elements of $\bigwedge^2 \mathfrak{a}^*$ give primitive 2-forms---and there are none!

\begin{corollary}
The space of primitive 2-forms on a finite-dimensional Lie group always vanishes.
\end{corollary}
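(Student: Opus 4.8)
The plan is to show that the primitivity equation, once restricted to the identity element of $G^2$, already forces the underlying bilinear datum to vanish. By the Theorem a bi-invariant---and hence, by the remark preceding it, in particular a primitive---$2$-form $\eta$ corresponds to an element of $\bigwedge^2 \mathfrak{a}^*$, equivalently to its $\Ad$-invariant alternating value $\eta_e \in \bigwedge^2\mathfrak{g}^*$ at the identity $e \in G$, which factors through the abelianisation $\mathfrak{a} = \mathfrak{g}^{ab}$. Since a primitive form is left-invariant, $\eta$ vanishes identically if and only if $\eta_e$ does, so it suffices to prove $\eta_e = 0$.

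To that end I would pull back the primitivity identity $m^*\eta = \pr_1^*\eta + \pr_2^*\eta$ to the point $(e,e) \in G^2$ and evaluate on tangent vectors. Under the canonical identification $T_{(e,e)}(G^2) \cong \mathfrak{g}\oplus\mathfrak{g}$, the differentials of the projections $\pr_1,\pr_2$ are the two coordinate projections, while the differential of the multiplication map is $dm_{(e,e)}(X,Y) = X + Y$. Feeding in tangent vectors $(X_1, Y_1)$ and $(X_2, Y_2)$ and expanding $\eta_e(X_1 + Y_1,\, X_2 + Y_2)$ by bilinearity, the two diagonal terms $\eta_e(X_1,X_2)$ and $\eta_e(Y_1,Y_2)$ cancel against the right-hand side, and the primitivity equation collapses to
\[
\eta_e(X_1, Y_2) + \eta_e(Y_1, X_2) = 0 \qquad \text{for all } X_1, X_2, Y_1, Y_2 \in \mathfrak{g}.
\]
Specialising to $Y_1 = 0$ (equivalently $X_2 = 0$) yields $\eta_e(X_1, Y_2) = 0$ for all $X_1, Y_2 \in \mathfrak{g}$, so $\eta_e = 0$ and therefore $\eta = 0$.

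This argument uses only the restriction of the primitivity condition to the identity, so neither the global constraints on $G^2$ nor the full strength of the classification Theorem are strictly needed; the same computation descends verbatim to $\mathfrak{a}$, which reconciles the conclusion with the identification $\Omega^2_I(G)\cong\bigwedge^2\mathfrak{a}^*$ and shows directly that no nonzero element of $\bigwedge^2\mathfrak{a}^*$ is primitive. There is no real analytic obstacle here, and the only point requiring care is the computation $dm_{(e,e)}(X,Y) = X + Y$ together with the bookkeeping of the four tangent vectors. I would expect the sole conceptual content to be the observation that primitivity demands the vanishing of the ``mixed'' expression $\eta_e(X,Y') + \eta_e(Y,X')$, which for an alternating form already exhausts all of $\eta_e$.
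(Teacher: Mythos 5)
Your proof is correct, and it takes a genuinely leaner route than the paper's. The paper first reduces to the simply-connected case, invokes the classification $\Omega^2_I(G)\simeq\bigwedge^2\mathfrak{a}^*$ to write $\eta$ as a pullback of a form $\lambda$ on the abelianised group $\mathfrak{a}$, reduces $\lambda$ to the standard form $dx\wedge dy$ on $\mathbb{R}^2$, and only then performs the computation $dx_1\wedge dy_1+dx_2\wedge dy_2-d(x_1+x_2)\wedge d(y_1+y_2)=-dx_1\wedge dy_2-dx_2\wedge dy_1\neq 0$; the non-connected case is then handled by a separate restriction argument. Your computation of the surviving mixed terms $\eta_e(X_1,Y_2)+\eta_e(Y_1,X_2)$ is algebraically the same cancellation, but you perform it directly on $T_{(e,e)}(G^2)\cong\mathfrak{g}\oplus\mathfrak{g}$ using $dm_{(e,e)}(X,Y)=X+Y$, so you bypass the universal cover, the classification theorem, and the descent to $\mathfrak{a}$ entirely. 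The only external input you need is that a primitive form is left-invariant (so that $\eta_e=0$ forces $\eta=0$), which the paper establishes in the introduction by restricting the primitivity equation to $\{g\}\times G$; note that this also disposes of the non-connected case automatically, since left translation reaches every component. What your approach buys, beyond brevity, is generality: the argument uses nothing about finite dimensionality or Milnor regularity, so it proves Corollary~\ref{cor:no_primitive_2-forms} for arbitrary locally convex Lie groups in the same breath, whereas the paper's infinite-dimensional version again routes through the classification and a restriction to a two-dimensional subspace of $\mathfrak{a}$. What the paper's longer route buys is the explicit description of \emph{which} bi-invariant forms exist before showing none of them is primitive, which is the substance of the surrounding theorems; as a proof of this particular corollary, though, your version is self-contained and preferable.
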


However, the more interesting case for our purposes is the infinite-dimensional case, and which is of course a source of a number of topologically and group-theoretically nontrivial central extensions as above.

\begin{theorem}
Let $G$ be a connected Milnor regular Lie group, and let $\mathfrak{a} = \mathfrak{g}/\overline{[\mathfrak{g},\mathfrak{g}]}$ be the topological abelianisation of its Lie algebra.
Then there is a vector space isomorphism $\Omega^2_I(G) \simeq \bigwedge^2 \mathfrak{a}^*$.
\end{theorem}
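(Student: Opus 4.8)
The plan is to reduce the statement to a purely Lie-algebraic one about $\Ad$-invariant forms, just as in the finite-dimensional Theorem, and then to repeat the underlying algebraic computation while tracking the topology of $\mathfrak{g}$. First I would identify $\Omega^2_I(G)$ with the continuous $\Ad$-invariant alternating bilinear forms on $\mathfrak{g}$. A bi-invariant $2$-form $\eta$ is in particular left-invariant, hence determined by its value $\omega := \eta_e \in \bigwedge^2\mathfrak{g}^*$, a continuous alternating form; conversely each such $\omega$ extends to a unique left-invariant form, whose right-invariance is equivalent to $\Ad$-invariance of $\omega$. This gives a linear isomorphism $\Omega^2_I(G) \cong (\bigwedge^2\mathfrak{g}^*)^{\Ad}$ onto the continuous $\Ad$-invariant forms, and is formal, identical to the finite-dimensional case.

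The second step is to replace global $\Ad$-invariance by infinitesimal $\ad$-invariance, $\omega([X,Y],Z) + \omega(Y,[X,Z]) = 0$. Differentiating $\Ad$-invariance along a curve through the identity yields $\ad$-invariance at once. For the converse I would integrate: given a smooth path $g\colon[0,1]\to G$ from $e$ to an arbitrary $g$, the \emph{real-valued} function $c(t) := \omega(\Ad_{g(t)}Y,\Ad_{g(t)}Z)$ is smooth and, writing $\delta := \delta^l g(t)\in\mathfrak{g}$ for the left logarithmic derivative and $\omega_t := \Ad_{g(t)}^*\omega$, satisfies $c'(t) = \omega_t([\delta,Y],Z) + \omega_t(Y,[\delta,Z])$. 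Since $\Ad_{g(t)}$ is a Lie-algebra automorphism, $\ad$-invariance of $\omega$ transfers to $\omega_t$, so $c'(t)\equiv 0$, $c$ is constant, and $\omega$ is $\Ad_g$-invariant. This is where both hypotheses enter: connectedness, so that every $g$ is joined to $e$ by a smooth path, and Milnor regularity, ensuring $\Ad$ is smooth and the logarithmic derivative well-behaved so that the displayed formula for $c'$ is legitimate. I expect this passage from infinitesimal to global invariance to be the main obstacle, since without a well-behaved exponential covering a neighbourhood of $e$ one cannot simply invoke $\Ad_{\exp X} = e^{\ad_X}$ as in the finite-dimensional proof.

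Finally I would show that the continuous $\ad$-invariant forms are exactly the pullbacks of continuous forms on $\mathfrak{a}$. Setting $T(X,Y,Z) := \omega([X,Y],Z)$, antisymmetry of the bracket gives $T(X,Y,Z) = -T(Y,X,Z)$, while $\ad$-invariance gives $T(X,Y,Z) = \omega([X,Z],Y) = T(X,Z,Y)$; alternating these two symmetries forces $T = -T$, so $\omega([X,Y],Z) = 0$ for all $X,Y,Z$. Thus $\omega$ annihilates $[\mathfrak{g},\mathfrak{g}]$, and by continuity it annihilates the closure $\overline{[\mathfrak{g},\mathfrak{g}]}$---this is precisely why the topological abelianisation, rather than the naive one, appears. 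Hence $\omega$ descends along the quotient $q\colon\mathfrak{g}\to\mathfrak{a}$, and conversely each pullback $q^*\bar\omega$ is continuous and $\ad$-invariant. Composing the three isomorphisms delivers $\Omega^2_I(G)\simeq\bigwedge^2\mathfrak{a}^*$, the only genuinely infinite-dimensional inputs being the integration of the second step and the continuity argument that forces the closure in the third; everything else runs verbatim from the finite-dimensional case.
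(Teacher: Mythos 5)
Your proposal is correct, and it reaches the conclusion by a genuinely different route in the one place where the real work happens. The paper reduces to the simply-connected case via the universal cover, notes that the reduced cocycle identity $\eta([X,Y],Z)=0$ (obtained by combining the Chevalley--Eilenberg cocycle condition coming from $d\eta=0$ with infinitesimal $\Ad$-invariance) forces descent to $\mathfrak{a}=\mathfrak{g}/\overline{[\mathfrak{g},\mathfrak{g}]}$, and then --- for surjectivity --- integrates $q\colon\mathfrak{g}\to\mathfrak{a}$ to a group homomorphism $Q\colon G\to\mathfrak{a}$ using Lie's second theorem for Milnor regular groups (which is where Mackey completeness of $\mathfrak{a}$ and regularity of its additive group enter), so that $q(\Ad_g X)=q(X)$ and pullbacks from $\mathfrak{a}$ are $\Ad(G)$-invariant. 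You instead prove the equivalence of continuous $\ad$-invariance with global $\Ad(G)$-invariance directly, by differentiating $c(t)=\omega(\Ad_{g(t)}Y,\Ad_{g(t)}Z)$ along a smooth path and observing $c'\equiv 0$; this dispenses with both the universal covering group and Lie's second theorem, and in fact uses less than full Milnor regularity (smoothness of $\Ad$ and smooth path-connectedness of a connected Lie group hold for arbitrary locally convex Lie groups, and continuous bilinear maps are smooth in the Michal--Bastiani calculus, so the differentiation is legitimate). Your derivation of $\omega([X,Y],Z)=0$ is also leaner: you get it purely from $\ad$-invariance via the symmetry argument (antisymmetric in the first pair, symmetric in the overlapping pair, hence zero), without invoking closedness of bi-invariant forms. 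What the paper's route buys is an explicit accounting of where regularity is used and a parallel structure with the finite-dimensional proof; what yours buys is a shorter, more self-contained argument that sidesteps the integration theorems entirely. The only points you should make explicit are that the left-invariant extension of a continuous alternating form is a genuinely smooth $2$-form in the locally convex setting, and that the descended form $\bar\omega$ on $\mathfrak{a}$ is continuous for the quotient topology --- both standard, and both also implicitly assumed in the paper.
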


For example, if $G$ any Milnor regular Lie group with topologically perfect Lie algebra (i.e.\ the commutator ideal is dense in the whole algebra), then we get a vanishing result as in the compact semisimple case.

The assumption of Milnor regularity is very generous here, as it holds for all Banach Lie groups and many other infinite-dimensional groups of interest, like loop groups and diffeomorphism groups of (structured) compact manifolds (see \S \ref{sec:inf_dim_case} below).
In particular, it follows from the classification result that both the group $Symp(M,\omega)$ of symplectomorphisms of a compact symplectic manifold $(M,\omega)$, and the group $Diff(M,\mu)$ of volume-preserving diffeomorphisms of a comapct manifold $M$ equipped with a volume form $\mu$, have finite-dimensional spaces of bi-invariant 2-forms.

One can derive further vanishing results from this, using the fact that pullback along a surjective submersion (in the weakest possible sense: surjectivity on tangent spaces) gives an injective map between spaces of differential forms.
This can give rise to examples of infinite-dimensional Lie groups with \emph{non-vanishing} $H^2$ but with no bi-invariant 2-forms.

An example of a Lie group satisfying the hypotheses of this theorem is the Banach Lie group $\PU$ (see Example~\ref{example:none_on_PU} below), which thus admits no non-zero bi-invariant forms.
In \S2.2 of \cite{MR_06} it is assumed there is a connection on $\U\to \PU$ whose curvature 2-form $\omega$ is primitive---which is unfortunate because such a 2-form would be bi-invariant, hence identically zero. 
In fact, we can state a stronger result that is even more damning:

\begin{corollary}
The space of primitive 2-forms on a Milnor regular Lie group always vanishes.
\end{corollary}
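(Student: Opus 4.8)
The plan is to reduce the whole statement to a single linear computation at the identity. By the observation recorded in the introduction, any primitive 2-form $\eta$ (that is, the case $\alpha = 0$ of (\ref{eq:Murray-Stevenson_eq}), namely $m^*\eta = \pr_1^*\eta + \pr_2^*\eta$) is automatically bi-invariant, and in particular left-invariant. Hence $\eta$ is completely determined by its value $\eta_e \in \bigwedge^2 \mathfrak{g}^*$ at the identity, and it suffices to show that $\eta_e = 0$.

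First I would evaluate the primitivity condition at the point $(e,e) \in G \times G$, whose tangent space is canonically $\mathfrak{g} \oplus \mathfrak{g}$. The differentials of the three maps involved are the standard ones: $dm_{(e,e)}(X,Y) = X+Y$, while $d(\pr_1)_{(e,e)}(X,Y) = X$ and $d(\pr_2)_{(e,e)}(X,Y) = Y$. Feeding a pair of tangent vectors $(X_1,Y_1),(X_2,Y_2)$ into both sides and expanding the left-hand side by bilinearity yields, after cancelling the $\eta_e(X_1,X_2)$ and $\eta_e(Y_1,Y_2)$ terms, the identity
\[
\eta_e(X_1,Y_2) + \eta_e(Y_1,X_2) = 0
\]
for all $X_1,X_2,Y_1,Y_2 \in \mathfrak{g}$. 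Setting $Y_1 = X_2 = 0$ leaves $\eta_e(X_1,Y_2) = 0$ for arbitrary $X_1,Y_2$, so $\eta_e = 0$; and since a left-invariant form vanishing at $e$ vanishes everywhere, $\eta \equiv 0$.

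This argument is uniform across the finite- and infinite-dimensional cases, as it uses only the values and first derivatives of forms and maps at a single point together with left-invariance, all of which are purely pointwise-linear data. I do not anticipate any serious obstacle: the one fact that must be verified rather than assumed is the formula $dm_{(e,e)}(X,Y) = X+Y$ for the differential of multiplication, which holds for any Lie group modelled on a locally convex space and needs no regularity hypothesis. I would also note that the classification Theorem is not strictly required here---primitivity already forces $\eta_e = 0$ before one even restricts attention to $\bigwedge^2 \mathfrak{a}^*$---though the conclusion is of course consistent with it, since the alternating form $\eta_e$ is annihilated outright.
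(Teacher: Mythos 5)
Your argument is correct, and it is genuinely more direct than the paper's. The paper first reduces to the connected, simply-connected case, invokes the classification $\Omega^2_I(G)\simeq\bigwedge^2\mathfrak{a}^*$ (hence Milnor regularity) to push every bi-invariant 2-form down to the additive group of the topological abelianisation $\mathfrak{a}$, restricts to a 2-dimensional subspace $V\subset\mathfrak{a}$ on which the form is nonzero, and computes $\delta(dx\wedge dy)=-dx_1\wedge dy_2-dx_2\wedge dy_1\neq 0$ there. You instead localise the same cross-term computation at the single point $(e,e)$: using $dm_{(e,e)}(X,Y)=X+Y$, primitivity forces $\eta_e(X_1,Y_2)+\eta_e(Y_1,X_2)=0$ and hence $\eta_e=0$, and the left-invariance that primitivity already implies (as noted in the paper's introduction) propagates this to all of $G$. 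The linear identity you extract is exactly the one the paper finds on $V^2$, but your route never needs to descend to $\mathfrak{a}$, so it dispenses with the classification theorem, with connectedness and simple-connectedness, and with Milnor regularity altogether --- it proves both Corollary~\ref{cor:no_primitive_2-forms_fin_dim} and Corollary~\ref{cor:no_primitive_2-forms} uniformly for any Lie group modelled on a locally convex space. What the paper's route buys is only economy of exposition: having already established the classification as the main theorem, the vanishing of primitive forms falls out of the structure of $\bigwedge^2\mathfrak{a}^*$ with no further pointwise differential computation on $G$ itself. Your one flagged verification, that $dm_{(e,e)}(X,Y)=X+Y$ holds without regularity hypotheses, is indeed standard (differentiate $m(g,e)=g$ and $m(e,h)=h$ and add), so there is no gap.
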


The rest of the article is as follows: the next section is dedicated to reviewing some preliminary material and proving the results about finite-dimensional Lie groups, followed by a section covering the main result for infinite-dimensional Lie groups. 
I conclude with a section of examples of infinite-dimensional Lie groups to which these results apply.

\setcounter{theorem}{0}

\section{Bi-invariant forms on finite-dimensional Lie groups}

First, a summary of background results.

To restate the main definitions, recall that a differential $k$-form $\eta$ on a Lie group $G$ (finite or infinite-dimensional) is \emph{bi-invariant} if for all $g\in G$, $\eta$ is invariant under right and left translation by $g$, namely $L^*_g\eta = \eta = R^*_g\eta$. 
The $k$-form $\eta$ is \emph{primitive} if it satisfies
\[
    m^*\eta = \pr_1^*\eta + \pr_2^*\eta
\]
or equivalently $\delta(\eta) = \pr_1^*\eta + \pr_2^*\eta - m^*\eta = 0$.
The subspace of bi-invariant $k$-forms on any Lie group $G$ will be denoted $\Omega^k_I(G)$, and the subspace of primitive forms will be denoted $\Omega^k_p(G)$.
Moreover, $\Omega_I^k$ and $\Omega_p^k$ are contravariant functors and the inclusion is natural, as is the inclusion into the space of all $k$-forms.
Moreover, we have that exterior differentiation restricts to a differential on bi-invariant resp.\ primitive $k$-forms.

\begin{lemma}\label{lemma:surj_subm_pullback_fin_dim}
If $p\colon H\onto G$ be a surjective submersion between Lie groups, then the induced maps $p^*\colon \Omega^k_I(G)\to \Omega^k_I(H)$ and $p^*\colon \Omega^k_p(G)\to \Omega^k_p(H)$ are injective.
\end{lemma}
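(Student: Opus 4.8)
The plan is to reduce both claims to a single underlying fact: that the pullback $p^*\colon \Omega^k(G)\to\Omega^k(H)$ on \emph{all} $k$-forms is injective. Once that is in hand, the two maps in the statement are merely the restrictions of this one injective linear map to the subspaces $\Omega^k_I(G)\subset\Omega^k(G)$ and $\Omega^k_p(G)\subset\Omega^k(G)$. These restrictions do land in the corresponding subspaces on $H$ by the contravariant functoriality and naturality recorded before the statement (which uses that $p$ is a homomorphism, so it intertwines the translation maps and the multiplication map $m$), and the restriction of an injective map to any subspace is again injective.

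So the core of the argument is to show that $p^*\eta = 0$ forces $\eta = 0$, and here the only hypothesis I would invoke is surjectivity on tangent spaces. Fix a point $g\in G$ and tangent vectors $v_1,\dots,v_k\in T_gG$. Since $p$ is surjective there is some $h\in H$ with $p(h)=g$, and since $dp_h\colon T_hH\to T_gG$ is surjective I may choose lifts $w_1,\dots,w_k\in T_hH$ with $dp_h(w_i)=v_i$. Unwinding the definition of the pullback then gives
\[
\eta_g(v_1,\dots,v_k) = \eta_g\big(dp_h(w_1),\dots,dp_h(w_k)\big) = (p^*\eta)_h(w_1,\dots,w_k) = 0 .
\]
As $g$ and the $v_i$ were arbitrary, $\eta$ vanishes identically, establishing injectivity on $\Omega^k(G)$.

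The one point to be careful about is that this lifting argument uses \emph{only} the pointwise surjectivity of $dp_h$, which is exactly the weak notion of submersion adopted here: I do not need local sections of $p$, nor a splitting of the tangent map, so the reasoning is insensitive to whether $G$ and $H$ are finite- or infinite-dimensional. I therefore expect no genuine obstacle; the entire content sits in the elementary lift of the $v_i$ through the surjective differential, with the preservation of bi-invariance and primitivity supplied by the functoriality already established.
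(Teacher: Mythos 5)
Your argument is correct and is essentially the paper's own proof: both reduce the claim to the injectivity of $p^*$ on all of $\Omega^k(G)$, established by lifting tangent vectors through the pointwise-surjective differential at any point of the fibre, with the restriction to the bi-invariant and primitive subspaces handled by the functoriality already recorded. No substantive difference.
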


\begin{proof}
This follows directly from the fact $p^*\colon \Omega^k(G)\to \Omega^k(H)$ is injective, as can be directly calculated: given $\eta$ a $k$-form on $G$ such that $p^*\eta= 0$ and tangent vectors $X_1,\ldots,X_k$ at $g\in G$, choose lifts $\widetilde{X_1},\ldots,\widetilde{X_k}$ based at any $h\in p^{-1}$, and then 
\[
    0 = p^*\eta_h(\widetilde{X_1},\ldots,\widetilde{X_k}) = \eta_g(X_1,\ldots,X_k).
\]
Thus $\eta=0$, and $p^*$ is injective.
\end{proof}

A merely \emph{left} invariant $k$-form is specified completely by its values on the tangent space at the identity, i.e.\ by the linear map $\bigwedge^k\mathfrak{g} \to \RR$.
A \emph{bi-invariant} form $\eta$ is then one that further satisfies
\begin{align*}
(R^*_g\eta)_e(X_1,\ldots,X_k)   & = \eta_g(R_{g*}X_1,\ldots,R_{g*}X_k)\\
                                & = \eta_g(L_{g*}\Ad_{g^{-1}}X_1,\ldots,L_{g*}\Ad_{g^{-1}}X_k)\\
                                & = \eta_e(\Ad_{g^{-1}}X_1,\ldots,\Ad_{g^{-1}}X_k)\\
                                & = \eta_e(X_1,\ldots,X_k)
\end{align*}
and hence the $k$-linear map $\eta_e$ is $\Ad$-invariant.
Thus the restriction map $\Omega^k_I(G) \to \bigwedge^k\mathfrak{g}^*$ factors through the subspace $(\bigwedge^k\mathfrak{g}^*)^{\Ad(G)}$.
Conversely, any element $\eta_0 \in (\bigwedge^k\mathfrak{g}^*)^{\Ad(G)}$ can be used to define a (smooth) bi-invariant $k$-form on $G$, by $\eta_g(gX_1,\ldots,gX_k) = \eta_0(X_1,\ldots,X_k)$.
Thus we have an isomorphism $\Omega_I^k(G) \simeq (\bigwedge^k\mathfrak{g}^*)^{\Ad(G)}$.

It is an old result (see eg \cite[\S4.3]{Reeder_95}) that for a compact connected Lie group $G$ the cochain complex of bi-invariant differential forms doesn't just calculate the de~Rham cohomology of $G$, it is in fact \emph{isomorphic} to the de~Rham cohomology, in that $(\bigwedge^k\mathfrak{g}^*)^{\Ad(G)} \simeq H^k_{dR}(G)$ for each $k$.
This is because the exterior derivative of a bi-invariant form is always $0$.
As a result, for any compact Lie group with vanishing $H^2(G,\RR)$, there are \emph{no} bi-invariant differential forms.

Further the coboundary map $\delta^{CE}$ of the Chevalley--Eilenberg complex of $\fracg$ with values in the trivial $\mathfrak{g}$-module $\RR$, namely
\[
    \delta^{CE}(\eta_e)(X_1,\ldots,X_{k+1}) := \sum_{1\leq i < j \leq k+1} (-1)^{i+j}\eta_e\left([X_i,X_k],X_1,\ldots,\widehat{X}_i,\ldots,\widehat{X}_j,\ldots,X_{k+1}\right)
\]
agrees with that given by the evaluation of $d\eta$ on $T_eG$ (up to a possible pre-factor, depending on conventions).
But since bi-invariant forms are automatically $d$-closed, the resulting alternating $k$-linear map on the Lie algebra is $\delta^{CE}$-closed, and hence is a Lie algebra cocycle.
In the case that $k=2$ this means that $\eta\in (\bigwedge^2\mathfrak{g}^*)^{\Ad(G)}$ satisfies
\begin{equation}\label{eq:2-cocycle_eq}
    \eta([X,Y],Z) + \eta([Y,Z],X) + \eta([Z,X],Y) = 0.
\end{equation}
Further, $\Ad$-invariance means that for all $g\in G$, we have $\eta(\Ad_g X,Y) = \eta(X,\Ad_{g^{-1}}Y)$, and taking $g=\exp(tZ)$ and looking at the $t$-derivative at $0$, we get
\[
    \eta([Z,X],Y) = \eta(X,[-Z,Y]) = -\eta([Y,Z],X).
\]
Thus in fact (\ref{eq:2-cocycle_eq}) reduces to
\begin{equation}\label{eq:reduced_2-cocycle_eq}
    \eta([X,Y],Z) = 0, \quad \forall X,Y,Z\in \mathfrak{g}.
\end{equation}

\begin{lemma}\label{lemma:descending_Ad-invariant_2-cocycles}
The $\Ad$-invariant 2-cocycle $\eta\colon \bigwedge^2 \mathfrak{g}\to \RR$ descends uniquely to an alternating bilinear form $\lambda$ on the abelian Lie algebra $\mathfrak{g}^{ab} = \mathfrak{g}/[\mathfrak{g},\mathfrak{g}]$.
\end{lemma}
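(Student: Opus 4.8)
The plan is to extract everything from the reduced cocycle equation~(\ref{eq:reduced_2-cocycle_eq}), together with the fact that $\eta$ is alternating; the genuine ``2-cocycle'' content has already been spent in deriving~(\ref{eq:reduced_2-cocycle_eq}) from~(\ref{eq:2-cocycle_eq}), so no further use of the bracket structure is needed. First I would read~(\ref{eq:reduced_2-cocycle_eq}) as the statement that $\eta(W,Z)=0$ whenever $W$ is a commutator $[X,Y]$ and $Z\in\mathfrak{g}$ is arbitrary. Since the commutator ideal $[\mathfrak{g},\mathfrak{g}]$ is by definition the linear span of all commutators, bilinearity of $\eta$ in its first argument upgrades this at once to $\eta(W,Z)=0$ for every $W\in[\mathfrak{g},\mathfrak{g}]$ and every $Z\in\mathfrak{g}$.

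Next I would use the alternating property: from $\eta(Z,W)=-\eta(W,Z)$ the form also vanishes whenever its \emph{second} argument lies in $[\mathfrak{g},\mathfrak{g}]$. Hence $\eta$ is annihilated by $[\mathfrak{g},\mathfrak{g}]$ in either slot.

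Finally I would invoke the universal property of the quotient vector space $\mathfrak{g}^{ab}=\mathfrak{g}/[\mathfrak{g},\mathfrak{g}]$: a bilinear form that kills a subspace in each argument factors uniquely through the projection $q\colon\mathfrak{g}\to\mathfrak{g}^{ab}$, yielding a bilinear $\lambda$ on $\mathfrak{g}^{ab}$ with $\eta=\lambda\circ(q\times q)$. The form $\lambda$ is alternating because $\eta$ is, and its uniqueness is immediate from the surjectivity of $q\times q$. There is no real obstacle here; the only point demanding a moment's care is that the passage from ``vanishing on commutators'' to ``vanishing on the commutator ideal'' rests solely on bilinearity, and that $\mathfrak{g}^{ab}$ is used purely as a vector-space quotient, its (trivial) bracket playing no role in the descent.
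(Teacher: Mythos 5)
Your proposal is correct and follows essentially the same route as the paper: both arguments rest on equation~(\ref{eq:reduced_2-cocycle_eq}), use bilinearity to pass from single commutators to the span $[\mathfrak{g},\mathfrak{g}]$, and use skew-symmetry to handle the second slot. The only cosmetic difference is that you phrase the descent via the universal property of the quotient, whereas the paper verifies well-definedness directly by comparing two lifts.
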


\begin{proof}
For $v,w \in \mathfrak{g}^{ab}$, pick $X_v,X_w\in \mathfrak{g}$ lifting them. 
Then define $\lambda(v,w) = \eta(X_v,X_w)$, and one can easily check that given another choice of lift\footnote{Without loss of generality we only need to check one side by skew-symmetry.} $X'_v$ of $v$, hence with $X_v - X'_v$ a sum of commutators, we have 
\[
    \eta(X_v,X_w) = \eta(X'_v + \sum [Y_i,Z_i],X_w) = \eta(X'_v,X_w) + \sum \eta([Y_i,Z_i],X_w) =  \eta(X'_v,X_w),
\]
making $\lambda$ well-defined. Uniqueness is similarly easily checked.
\end{proof}

The last technical result we need is the classification result for compact Lie groups (see eg \cite[Theorem 10.4]{Procesi_07}).

\begin{proposition}
Every compact connected Lie group $G$ is a quotient of a compact Lie group of the form $\widetilde{G} = K_1\times \cdots\times K_n \times U(1)^d$, where the Lie groups $K_j$ are simple and simply connected, by a finite central subgroup $A$ such that $A\cap \left(1\times\cdots\times 1 \times U(1)^d\right) = 1$.
\end{proposition}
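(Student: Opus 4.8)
The plan is to prove this standard structural statement by descending to the Lie algebra, integrating, and then quotienting the abelian directions with care. First I would use that the Lie algebra $\mathfrak{g}$ of a compact group is \emph{reductive}: averaging any inner product over $G$ yields an $\Ad(G)$-invariant inner product on $\mathfrak{g}$, so the orthogonal complement of any ideal is again an ideal, and $\mathfrak{g}$ splits as a direct sum of simple and one-dimensional ideals. Concretely this gives $\mathfrak{g} = \mathfrak{z} \oplus \mathfrak{k}_1 \oplus \cdots \oplus \mathfrak{k}_n$, with $\mathfrak{z}$ the abelian centre of dimension $d$ and each $\mathfrak{k}_j$ a simple ideal of compact type, so that $[\mathfrak{g},\mathfrak{g}] = \mathfrak{k}_1 \oplus \cdots \oplus \mathfrak{k}_n$ is semisimple.

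Next I would integrate each piece. Each $\mathfrak{k}_j$ has a unique connected, simply connected integrating group $K_j$, and the essential input is \emph{Weyl's theorem}: since the Killing form of $\mathfrak{k}_j$ is negative definite, the induced bi-invariant metric has strictly positive Ricci curvature, so Bonnet--Myers forces $K_j$ to be compact (equivalently, the fundamental group of any connected group with Lie algebra $\mathfrak{k}_j$ is finite). Integrating $\mathfrak{z}$ by $\RR^d$, the connected simply connected group with Lie algebra $\mathfrak{g}$ is $\widetilde{G}_0 := K_1 \times \cdots \times K_n \times \RR^d$, and the identity on $\mathfrak{g}$ integrates to a covering homomorphism $p\colon \widetilde{G}_0 \onto G$ onto the connected group $G$. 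Its kernel $\Gamma := \ker p \cong \pi_1(G)$ is discrete, and being a discrete normal subgroup of a connected group it is central.

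Because each $K_j$ is compact with finite centre, the centre of $\widetilde{G}_0$ is $F \times \RR^d$ with $F := Z(K_1)\times\cdots\times Z(K_n)$ finite, and $\Gamma \subseteq F \times \RR^d$. Here is where I would use compactness of $G$: the image $p(\{1\}^n\times\RR^d)$ is the connected subgroup with Lie algebra $\mathfrak{z}$, i.e.\ the identity component $Z(G)_0$ of the centre, which is a $d$-torus; hence the restriction of $p$ to $\{1\}^n\times\RR^d$ is a covering $\RR^d \to T^d$ whose kernel $L := \Gamma\cap(\{1\}^n\times\RR^d)$ is a full-rank lattice in $\RR^d$. Quotienting by $L$ replaces $\RR^d$ by $\RR^d/L \cong U(1)^d$ and produces the \emph{compact} group $\widetilde{G} = K_1\times\cdots\times K_n\times U(1)^d$ together with an induced surjection $\widetilde{G}\onto G$ with kernel $A := \Gamma/L$.

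It then remains to check the properties of $A$. Projecting $F\times\RR^d \onto F$ has kernel $\{1\}\times\RR^d$ meeting $\Gamma$ exactly in $L$, so the composite $\Gamma \to F$ induces an injection $A = \Gamma/L \into F$; thus $A$ is finite, and it is central since $\Gamma$ is. The intersection condition $A\cap(1\times\cdots\times1\times U(1)^d)=1$ is then immediate from the construction, since an element of $A$ lying in the image of $\{1\}^n\times\RR^d$ has a representative in $\{1\}^n\times\RR^d$, hence in $L$, hence is trivial in $A$. Of the steps above, the one carrying genuine mathematical weight is the compactness of the simply connected groups $K_j$ (Weyl's finiteness theorem): the reductive splitting is linear algebra on $\mathfrak{g}$, and the passage from $\widetilde{G}_0$ to $\widetilde{G}$ is elementary manipulation of the central lattice, but without compactness of the $K_j$ one cannot guarantee that $\widetilde{G}$ is compact nor that $A$ is finite.
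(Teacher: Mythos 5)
Your proof is correct. The paper does not actually prove this proposition---it is quoted as a known classification result with a citation to Procesi---so there is no in-paper argument to compare against; what you have written is a complete, self-contained proof of the standard structure theorem, along essentially the classical lines. The load-bearing steps are all handled properly: the reductive splitting $\mathfrak{g}=\mathfrak{z}\oplus\mathfrak{k}_1\oplus\cdots\oplus\mathfrak{k}_n$ via an invariant inner product, Weyl's finiteness theorem to get compactness of the simply connected factors $K_j$ (and hence finiteness of $F=Z(K_1)\times\cdots\times Z(K_n)$), and---the one place where compactness of $G$ itself enters---the identification of $p(\{1\}^n\times\RR^d)$ with the torus $Z(G)_0$, which forces $L=\Gamma\cap(\{1\}^n\times\RR^d)$ to be a full-rank lattice so that $\RR^d/L\cong U(1)^d$. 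The finiteness of $A=\Gamma/L$ via the injection into $F$, and the intersection condition $A\cap\left(1\times\cdots\times1\times U(1)^d\right)=1$ from $\Gamma\cap(\{1\}^n\times\RR^d)=L$, both follow cleanly from your setup. You correctly identify where the genuine content lies; the only stylistic remark is that Weyl's theorem can be quoted directly rather than rederived via Bonnet--Myers, but that is a matter of taste, not correctness.
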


Putting all these together we get the first main result.

\begin{theorem}\label{thm:finite_dim_case}
Let $G$ be a connected finite-dimensional Lie group. 
Then there is an isomorphism $\Omega^2_I(G) \xrightarrow{\simeq} \bigwedge^2 \mathfrak{a}^*$, where $\mathfrak{a} = \mathfrak{g}^{ab}$ is the abelianisation of the Lie algebra of $G$. 
If $G$ is compact, then $\mathfrak{a}$ is the largest abelian summand of the Lie algebra $\mathfrak{g}$ of $G$.
\end{theorem}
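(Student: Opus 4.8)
The plan is to reduce the statement to linear algebra over the Lie algebra, using the isomorphism $\Omega^2_I(G) \simeq (\bigwedge^2\mathfrak{g}^*)^{\Ad(G)}$ already established above, and then to produce mutually inverse maps between $(\bigwedge^2\mathfrak{g}^*)^{\Ad(G)}$ and $\bigwedge^2\mathfrak{a}^*$. The forward map is essentially free: we saw above that a bi-invariant $2$-form is $d$-closed, so its value $\eta_e$ at the identity is an $\Ad$-invariant $2$-cocycle satisfying the reduced identity~\eqref{eq:reduced_2-cocycle_eq}; Lemma~\ref{lemma:descending_Ad-invariant_2-cocycles} then descends it to an alternating form $\lambda\in\bigwedge^2\mathfrak{a}^*$ on the abelianisation $\mathfrak{a}=\mathfrak{g}/[\mathfrak{g},\mathfrak{g}]$.

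For the reverse direction I would pull back along the quotient $q\colon\mathfrak{g}\onto\mathfrak{a}$, assigning to $\lambda\in\bigwedge^2\mathfrak{a}^*$ the form $\eta_e(X,Y):=\lambda(qX,qY)$. Since $q$ kills every commutator, this $\eta_e$ satisfies~\eqref{eq:reduced_2-cocycle_eq} automatically; the only thing to verify is that it lies in $(\bigwedge^2\mathfrak{g}^*)^{\Ad(G)}$, i.e.\ that it is $\Ad$-invariant, and this is the one point that genuinely uses connectedness of $G$. The operator $\ad_X=[X,-]$ maps $\mathfrak{g}$ into $[\mathfrak{g},\mathfrak{g}]=\ker q$, so it acts as zero on $\mathfrak{a}$; hence each $\Ad_{\exp(X)}=\exp(\ad_X)$ acts trivially on $\mathfrak{a}$, and since a connected $G$ is generated by exponentials the whole of $\Ad(G)$ fixes $\mathfrak{a}$ pointwise. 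Thus $\eta_e$ is $\Ad$-invariant. A routine check that descending and pulling back are mutually inverse then yields the isomorphism $\Omega^2_I(G)\simeq\bigwedge^2\mathfrak{a}^*$.

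For the compact case I would invoke the structure theorem for compact connected Lie groups stated above, which presents $G$ as a finite central quotient of $\widetilde{G}=K_1\times\cdots\times K_n\times U(1)^d$. A finite central quotient does not change the Lie algebra, so $\mathfrak{g}\cong\mathfrak{k}_1\oplus\cdots\oplus\mathfrak{k}_n\oplus\RR^d$ with each $\mathfrak{k}_j$ simple and hence perfect. Then $[\mathfrak{g},\mathfrak{g}]=\mathfrak{k}_1\oplus\cdots\oplus\mathfrak{k}_n$, so $\mathfrak{a}\cong\RR^d$ is the Lie algebra of the torus factor. To see this is the largest abelian summand I would note that in a compact (reductive) Lie algebra every abelian ideal lies in the centre $\mathfrak{z}(\mathfrak{g})$---it projects to an abelian ideal of the semisimple part $[\mathfrak{g},\mathfrak{g}]$, which is zero---while $\mathfrak{z}(\mathfrak{g})=\RR^d$ is itself an abelian summand; so $\mathfrak{z}(\mathfrak{g})$ is the maximal one, and the quotient map identifies it with $\mathfrak{a}$.

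The main obstacle, such as it is, is the $\Ad$-invariance of the pulled-back form in the second paragraph: this is exactly where connectedness of $G$ is used, and without it the map $\bigwedge^2\mathfrak{a}^*\to(\bigwedge^2\mathfrak{g}^*)^{\Ad(G)}$ could fail to be surjective, since the component group could act nontrivially on $\mathfrak{a}$. Everything else---well-definedness of the descent, the inverse relations, and the reductive decomposition in the compact case---is bookkeeping built on the lemmas and the structure theorem already in hand.
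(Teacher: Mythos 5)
Your proof is correct, and it follows the paper's broad strategy (reduce to the linear-algebra statement $(\bigwedge^2\mathfrak{g}^*)^{\Ad(G)}\simeq\bigwedge^2\mathfrak{a}^*$, descend via Lemma~\ref{lemma:descending_Ad-invariant_2-cocycles}, pull back along $q$), but the key step is handled by a genuinely different and somewhat more elementary argument. The paper first passes to the universal cover $\widetilde{G}$ (its step 1, justified because the kernel of $\widetilde{G}\to G$ is central so the adjoint actions agree), precisely so that in step 3 it can integrate $q\colon\mathfrak{g}\to\mathfrak{a}$ to a group homomorphism $Q\colon G\to\mathfrak{a}$ and deduce $q(\Ad_g X)=\Ad_{Q(g)}(qX)=qX$ from abelianness of the target. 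You instead argue infinitesimally: $\ad_X$ lands in $[\mathfrak{g},\mathfrak{g}]=\ker q$, so $\Ad_{\exp X}=\exp(\ad_X)$ induces the identity on $\mathfrak{a}$, and connectedness (generation by exponentials) does the rest. This buys you two things: you never need the reduction to the simply-connected case, and you avoid invoking Lie's second theorem --- a real simplification in finite dimensions, though the paper's route is the one that transfers cleanly to the infinite-dimensional Theorem~\ref{thm:main}, where integrating $q$ to $Q$ is exactly the step requiring Milnor regularity (your series $\exp(\ad_X)$ would need replacing by, say, the observation that $\frac{d}{dt}q(\Ad_{\exp(tX)}Y)=q([X,\Ad_{\exp(tX)}Y])=0$). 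For the compact statement you supply slightly more than the paper does: the paper simply reads off $\mathfrak{g}^{ab}=\RR^d$ from the decomposition $\mathfrak{k}_1\oplus\cdots\oplus\mathfrak{k}_n\oplus\RR^d$, whereas you also justify maximality of the abelian summand by showing every abelian ideal of a reductive algebra sits in the centre; that addition is correct and harmless.
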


\begin{proof}
The proof will be done in numbered steps, for ease of referring to them later in the proof of the infinite-dimensional case.

\begin{enumerate}
\item If we have a covering map $\pi\colon \widetilde{G} \to G$ of connected Lie groups, then $\ker\pi < \widetilde{G}$ is a central subgroup and so the adjoint action of $\widetilde{G}$ on its Lie algebra factors through $\widetilde{G} \to G$, and so in fact there is an isomorphism
\[
    (\bigwedge\nolimits^2 \mathfrak{g}^*)^{\Ad(G)} \xrightarrow{\simeq} (\bigwedge\nolimits^2 \mathfrak{g}^*)^{\Ad(\widetilde{G})}.
\]
In particular, this means that $\Omega^2_I(G) \simeq \Omega^2_I(\widetilde{G})$ where $\widetilde{G}$ is the universal covering group of $G$, and so we can consider just the case of a simply-connected Lie group $G=\widetilde{G}$.

\item From Lemma~\ref{lemma:descending_Ad-invariant_2-cocycles} we know that $\eta\in (\bigwedge^2 \mathfrak{g}^*)^{\Ad(G)}$ descends uniquely to an alternating bilinear form on $\mathfrak{a} = \mathfrak{g}/[\mathfrak{g},\mathfrak{g}]$.
That is, $(\bigwedge^2 \mathfrak{g}^*)^{\Ad(G)} \subset \bigwedge^2 \mathfrak{g}^*$ is in the image of the injective map $q^*\colon \bigwedge^2 \mathfrak{a}^* \into \bigwedge^2 \mathfrak{g}^*$ induced from the surjection $q\colon \mathfrak{g}\to \mathfrak{a}$.
As a result there is an injective linear map $(\bigwedge^2 \mathfrak{g}^*)^{\Ad(G)}\to \bigwedge^2 \mathfrak{a}^*$.

\item Using the assumption that $G$ is simply-connected, the Lie algebra homomorphism $q\colon \mathfrak{g}\to \mathfrak{a}$ is the derivative of a unique map of Lie groups $Q\colon G\to \mathfrak{a}$, with codomain the additive group of the abelian Lie algebra $\mathfrak{a}$.
From general properties of Lie groups we know that the homomorphism $Q$ gives us the identity $q(\Ad_g(X)) = \Ad_{Q(g)}(q(X)) = q(X)$, where the latter equality is because $\mathfrak{a}$ is abelian.

We can now check that the image of $\bigwedge^2 \mathfrak{a}^* \into \bigwedge^2 \mathfrak{g}^*$ is inside the $\Ad$-invariant bilinear forms, since if $\lambda \in \bigwedge^2\mathfrak{a}^*$, we have 
\begin{align}
(q^*\lambda)(\Ad_g X,\Ad_g Y) &= \lambda(q(\Ad_g(X)),q(\Ad_g(Y)) \nonumber\\
& = \lambda(q(X),q(Y))\label{eq:ad-invariance_pullback_form}\\
& = (q^*\lambda)(X,Y).\nonumber
\end{align}

\item Thus $(\bigwedge^2 \mathfrak{g}^*)^{\Ad(G)} \simeq(\bigwedge^2 \mathfrak{a}^*)^{G} = \bigwedge^2 \mathfrak{a}^*$, as claimed. 
\end{enumerate}

Finally, in the case that $G$ is compact, it has a finite cover of the form $K_1\times \cdots\times K_n \times U(1)^d$, and hence has Lie algebra a direct sum $\mathfrak{k}_1\oplus \cdots \oplus \mathfrak{k}_n \oplus \mathbb{R}^d$ where each $\mathfrak{k}_j$ is simple and hence $\mathfrak{k}_j^{ab} = \{0\}$.
Thus $\mathfrak{g}^{ab} = \mathbb{R}^d$, the largest abelian direct summand.
\end{proof}

\begin{example}
There are no non-trivial bi-invariant forms not only on the standard examples of compact, simply-connected Lie groups $SU(n)$ ($n\geq 2$) and $\Spin(n)$ ($n\geq 3$) (not to mention the compact forms of the exceptional simple Lie groups), but this is also true for the classical groups $SO(n)$ ($n\geq 2$), $U(n)$ ($n\geq 1$), and $\Spin^c(n)$ ($n\geq 3$).
\end{example}

Looking at the level of the whole Lie group, what this means is that every bi-invariant 2-form $\eta$ on $G$ is the descent of a bi-invariant form on the universal covering group $\widetilde{G}$, and such a form arises by pullback  along the projection $q\colon \widetilde{G} \to \mathfrak{a}$, in the sense that $\pi^*\eta = q^* \lambda$, for $\lambda$ an alternating  bilinear form on $\mathfrak{a}$.

Even better, if the group $\pi_1(G)$ is finite, we know that\marginnote{The pushforward $\pi_*$ sums over the values of the form at all the points in the fibres.} $\frac{1}{|A|}\pi_*\pi^*$ is the identity map on $\Omega^2(G)$, and so given a bi-invariant 2-form $\eta$, it is of the form
\[
    \eta = \frac{1}{|A|}\pi_* \pi^* \lambda = \pi_* q^*\frac{1}{|A|}\lambda.
\]

\begin{remark}\label{rem:actual_symplectic_form}
The 2-form $\lambda$ is an example of a \emph{presymplectic} form on the vector space $\mathfrak{a}$, and one can in fact show that it is the pullback of an actual symplectic form $\omega$ along a surjective linear map $\mathfrak{a} \to V$.
One might as well assume that $(V,\omega)$ is the standard symplectic space $(\mathbb{R}^{2n},\sum_{j=1}^n dx_i\wedge dy_i)$.
Another route to Theorem~\ref{thm:finite_dim_case} is via classifying $\Ad$-invariant \emph{presymplectic} forms on Lie algebras, which arise via pullback of an $\Ad$-invariant symplectic form on a Lie algebra in a similar quotienting process. But $\Ad$-invariant symplectic forms only exist on abelian Lie algebras, as we have here.
\end{remark}

The results for connected Lie groups here can be extended somewhat to the arbitrary finite-dimensional case.

\begin{corollary}\label{cor:non-connected_case_bi-inv_forms}
Let $G$ be a finite-dimensional Lie group with identity component $G_0$. The restriction map $\Omega^2_I(G) \to \Omega^2_I(G_0)$ can be identified with the subspace inclusion $(\bigwedge^2 \mathfrak{a}^*)^{\Ad(\pi_0(G))} \into \bigwedge^2 \mathfrak{a}^*$, where as before, $\mathfrak{a} = \mathfrak{g}^{ab}$.
\end{corollary}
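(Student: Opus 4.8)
The plan is to realise both $\Omega^2_I(G)$ and $\Omega^2_I(G_0)$ as spaces of $\Ad$-invariant alternating bilinear forms on the common Lie algebra $\mathfrak{g}$ of $G$ and $G_0$, and then to identify the restriction map with the evident inclusion of invariant subspaces. First I would note that the computation establishing the isomorphism $\Omega^k_I(G)\simeq(\bigwedge^k\mathfrak{g}^*)^{\Ad(G)}$ above uses nothing about connectedness: a left-invariant form is already determined on every component of $G$ by its value $\eta_e\in\bigwedge^2\mathfrak{g}^*$ at the identity (each component being a left translate of $G_0$), and the additional right-invariance under all $g\in G$ is precisely $\Ad(G)$-invariance of $\eta_e$. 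This gives $\Omega^2_I(G)\simeq(\bigwedge^2\mathfrak{g}^*)^{\Ad(G)}$ and $\Omega^2_I(G_0)\simeq(\bigwedge^2\mathfrak{g}^*)^{\Ad(G_0)}$, and since $\Ad(G_0)\subseteq\Ad(G)$, under these identifications the restriction map is simply the subspace inclusion $(\bigwedge^2\mathfrak{g}^*)^{\Ad(G)}\into(\bigwedge^2\mathfrak{g}^*)^{\Ad(G_0)}$.

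Next I would apply Theorem~\ref{thm:finite_dim_case} to the connected group $G_0$ to obtain the isomorphism $q^*\colon\bigwedge^2\mathfrak{a}^*\xrightarrow{\simeq}(\bigwedge^2\mathfrak{g}^*)^{\Ad(G_0)}$ induced by the quotient $q\colon\mathfrak{g}\to\mathfrak{a}$. The remaining task is to transport the smaller subspace $(\bigwedge^2\mathfrak{g}^*)^{\Ad(G)}$ across this isomorphism. Because $[\mathfrak{g},\mathfrak{g}]$ is $\Ad$-stable, the adjoint action descends to a linear action of $G$ on $\mathfrak{a}$, and I claim this factors through $\pi_0(G)$. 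Indeed, its infinitesimal generator sends $X$ to the map induced by $\ad_X$ on $\mathfrak{a}$, which is zero since $\ad_X(Y)=[X,Y]\in[\mathfrak{g},\mathfrak{g}]=\ker q$; thus $G_0$, being connected, acts trivially and the action descends to $\pi_0(G)=G/G_0$.

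Finally, using the intertwining $q(\Ad_g X)=\overline{\Ad}_g(qX)$ for the induced action $\overline{\Ad}$ on $\mathfrak{a}$, one computes $(q^*\lambda)(\Ad_g X,\Ad_g Y)=\lambda(\overline{\Ad}_g qX,\overline{\Ad}_g qY)$, exactly as in equation~(\ref{eq:ad-invariance_pullback_form}) of the connected case. Hence $q^*\lambda$ is $\Ad(G)$-invariant if and only if $\lambda$ is fixed by the $\pi_0(G)$-action, that is $\lambda\in(\bigwedge^2\mathfrak{a}^*)^{\Ad(\pi_0(G))}$, and this pins down the image of the restriction map as the claimed subspace. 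The main obstacle---though it is more a point requiring care than a genuine difficulty---is establishing that the $\Ad(G_0)$-action on $\mathfrak{a}$ is trivial, since this is what permits the descent to the finite group $\pi_0(G)$; it is essentially a repackaging of Step~3 of the proof of Theorem~\ref{thm:finite_dim_case} applied to $G_0$.
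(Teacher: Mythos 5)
Your proposal is correct and follows essentially the same route as the paper: the paper invokes the general identity $W^H=(W^N)^{H/N}$ with $W=\bigwedge^2\mathfrak{a}^*$, $H=G$, $N=G_0$, together with the triviality of the $G_0$-action on $\mathfrak{a}$ from Step~3 of the proof of Theorem~\ref{thm:finite_dim_case}, which is exactly the content you unpack by hand (your infinitesimal argument for why $\overline{\Ad}$ descends to $\pi_0(G)$ is a re-derivation of that same step). No gaps; the extra care you take in checking that $\Omega^2_I(G)\simeq(\bigwedge^2\mathfrak{g}^*)^{\Ad(G)}$ needs no connectedness is a point the paper leaves implicit.
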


\begin{proof}
This follows from the general result that for a normal subgroup $N \unlhd H$ and a representation of $H$ on $W$, the subspace $W^H$ of $H$-invariants agrees with $(W^N)^{H/N}$, for the induced $H/N$ representation on the $N$-invariants $W^N$.
To prove the corollary, we take $W=\bigwedge^2 \mathfrak{a}^*$, $H = G$ and $N = G_0$, and we know that the action of $G_0$ on $\bigwedge^2 \mathfrak{a}^*$ is trivial, by step 3 in the proof of Theorem~\ref{thm:finite_dim_case}.
\end{proof}

Thus if $G_0$ itself admits no non-zero bi-invariant 2-forms, then neither does $G$.
A concrete example is given by $O(n)$, because $SO(n)$ is covered by the compact simple simply-connected group $\Spin(n)$, and the space of bi-invariant 2-forms vanishes for this latter group.

\begin{remark}
According to \cite[Theorem 4]{Heidenreich_et_al_21}, one can in fact extend the classification of compact connected Lie groups to \emph{arbitrary} compact Lie groups as quotients of certain semidirect products by a finite abelian subgroup that is central in the identity component $G_0$, namely $G\simeq (G_0\rtimes R)/P$. 
It follows from this result that $\pi_0(G)$ is a quotient of $R$, and one might thereby sharpen Corollary~\ref{cor:non-connected_case_bi-inv_forms} if so desired.
\end{remark}

And so we can give a non-existence result for primitive 2-forms.

\begin{corollary}\label{cor:no_primitive_2-forms_fin_dim}
On a finite-dimensional Lie group the space of primitive 2-forms vanishes.
\end{corollary}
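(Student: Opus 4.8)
The plan is to exploit the fact, recorded in the introduction, that every primitive form is automatically bi-invariant, so that $\Omega^2_p(G)\subseteq\Omega^2_I(G)$ and any primitive $\eta$ is determined by its value $\eta_e\in\bigwedge^2\mathfrak{g}^*$ at the identity. The primitivity condition $m^*\eta=\pr_1^*\eta+\pr_2^*\eta$ is an equality of $2$-forms on $G\times G$, so I would simply evaluate it at the point $(e,e)$, where $T_{(e,e)}(G\times G)\cong\mathfrak{g}\oplus\mathfrak{g}$. Since $dm_{(e,e)}(X,Y)=X+Y$ while $d(\pr_1)_{(e,e)}$ and $d(\pr_2)_{(e,e)}$ are the two projections, feeding in tangent vectors $(X_1,Y_1)$ and $(X_2,Y_2)$ turns primitivity into the purely linear identity
\[
    \eta_e(X_1+Y_1,\,X_2+Y_2)=\eta_e(X_1,X_2)+\eta_e(Y_1,Y_2)
\]
for all $X_1,X_2,Y_1,Y_2\in\mathfrak{g}$.

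Expanding the left-hand side by bilinearity and cancelling the two matching terms leaves $\eta_e(X_1,Y_2)+\eta_e(Y_1,X_2)=0$ for all arguments; setting $Y_1=0$ gives $\eta_e(X_1,Y_2)=0$ for every $X_1,Y_2\in\mathfrak{g}$, i.e.\ $\eta_e=0$. A bi-invariant form is in particular left-invariant and hence determined by $\eta_e$, so $\eta=0$ and $\Omega^2_p(G)=0$. I expect this route to be essentially obstruction-free: the only care required is the correct computation of $dm_{(e,e)}$ and the observation that vanishing at the identity propagates to all of $G$ by left-invariance. Notably the argument uses neither the finite-dimensionality of $G$ nor the full classification of Theorem~\ref{thm:finite_dim_case}, which is why essentially the same computation should reappear in the infinite-dimensional setting.

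If one prefers to route through the classification instead, the alternative is to start from $\eta\in\Omega^2_I(G)$, identify it under Theorem~\ref{thm:finite_dim_case} with the pullback $Q^*\bar\lambda$ of the constant form $\bar\lambda\in\bigwedge^2\mathfrak{a}^*$ along the homomorphism $Q\colon G\to\mathfrak{a}$, and observe that, since $Q$ intertwines the group operations, primitivity of $\eta$ forces primitivity of $\bar\lambda$ on the additive group $\mathfrak{a}$; one then runs the same cross-term computation on $\mathfrak{a}$ to conclude $\bar\lambda=0$. The only delicate point in this second route is transferring primitivity back down to $\mathfrak{a}$, which requires knowing that $(Q\times Q)^*$ is injective on $2$-forms---and this follows because $Q$ is a surjective submersion (its derivative $q$ being surjective), via the injectivity statement of Lemma~\ref{lemma:surj_subm_pullback_fin_dim}.
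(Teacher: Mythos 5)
Your first argument is correct, and it is genuinely different from---and more elementary than---the paper's proof. The paper routes through the full classification: it reduces to a simply-connected group, identifies a bi-invariant $\eta$ with the pullback of $\lambda\in\bigwedge^2\mathfrak{a}^*$ along $Q\colon G\to\mathfrak{a}$, transfers primitivity down to the additive group $\mathfrak{a}$, and then performs the cross-term computation $dx_1\wedge dy_1+dx_2\wedge dy_2-d(x_1+x_2)\wedge d(y_1+y_2)=-dx_1\wedge dy_2-dx_2\wedge dy_1\neq 0$ there. Your direct route carries out the same cross-term cancellation, but at the single point $(e,e)\in G\times G$, using only $dm_{(e,e)}(X,Y)=X+Y$ and the fact (recorded in the introduction, and needed in any case) that primitivity forces left-invariance, so that $\eta_e=0$ implies $\eta=0$. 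This bypasses Theorem~\ref{thm:finite_dim_case} entirely, needs no connectedness or simple-connectedness reduction, and, as you observe, applies verbatim to the infinite-dimensional case, where the paper instead reproves the statement as Corollary~\ref{cor:no_primitive_2-forms} by again passing through the abelianisation and a restriction to a two-dimensional subspace $V\subset\mathfrak{a}$. What the paper's longer route buys is contextual rather than logical: it exhibits \emph{where} the bi-invariant $2$-forms live (on $\mathfrak{a}$) and why none of them can be primitive, which fits the narrative of the classification; your route buys brevity and generality. Your second, classification-based route is essentially the paper's proof, and your remark that transferring primitivity back to $\mathfrak{a}$ needs injectivity of $(Q\times Q)^*$ (via surjectivity of $q$ and Lemma~\ref{lemma:surj_subm_pullback_fin_dim}) is a point the paper glosses over.
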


\begin{proof}
First consider a \emph{connected} finite-dimensional Lie group $G_0$, which we can take to be simply-connected, since every bi-invariant 2-form is uniquely specified by its value on the Lie algebra $\mathfrak{g}$ and the adjoint action of the universal covering group.

Then the bi-invariant 2-form $\eta$ on $G_0$ will be primitive if and only if the bi-invariant 2-form $\lambda$ on the additive Lie group $\mathfrak{a} =\mathfrak{g}^{ab}$ is, where $\eta$ is the pullback of $\lambda$. 
If we write $\lambda$ in terms of basis elements of $\bigwedge^2 \mathfrak{a}^*$, we can consider a bi-invariant 2-form on $\mathfrak{a}$ associated to generic basis element $e^i\wedge e^j$, and check if it gives rise to a primitive 2-form. 
The 2-form on $\mathfrak{a}$ is the pullback of the standard form $dx\wedge dy$ along some surjection $\mathfrak{a}\to \mathbb{R}^2$.
Thus we can directly calculate
\[
    dx_1 \wedge dy_1 + dx_2 \wedge dy_2 - d(x_1+x_2)\wedge d(y_1+y_2) = - dx_1 \wedge dy_2 - dx_2 \wedge dy_1 \neq 0.
\]
Thus no non-zero bi-invariant 2-form on a $\mathfrak{a}$ is primitive, and hence the same holds for a general connected finite-dimensional Lie group $G_0$.

Lastly, if there were a primitive 2-form on $G$, an arbitrary finite-dimensional Lie group, then it would restrict to a primitive 2-form on $G_0$, hence the space of primitive 2-forms on $G$ must vanish.
\end{proof}

\section{Bi-invariant forms on infinite-dimensional Lie groups}\label{sec:inf_dim_case}

First we need to specify what we mean by an infinite-dimensional Lie group.
The underlying geometric framework is that of \emph{locally convex} smooth manifolds\footnote{For a recent survey see \cite[\S 1]{Schmeding}, see also Appendix E of \emph{op.\ cit.} for the treatment of differential forms in this generality}, which includes all Banach and Fr\'echet manifolds.
Thus ``Lie group'' will always mean a Lie group with underlying manifold being locally convex.

For the purposes of having a workable relationship between Lie groups and Lie algebras, we restrict attention here to \emph{Milnor regular} Lie groups; a general reference for this material is \cite[\S3.3]{Schmeding}. 
We do not need the \emph{definition} of Milnor regularity, but one property that follows from it.
The property we require is that one can define a smooth exponentiation map $\exp\colon \fracg \to G$ and from it, for any Lie algebra element $X\in \fracg$, a smooth curve $\exp(tX)$ in $G$ through the identity element, such that this curve has left logarithmic derivative at $0$ equal to $X$.
Further, the assignment $X\mapsto \exp(tX)$ is smooth in $X \in \fracg$.
This reflects what happens in the finite-dimensional case, and we will work with this property formally without further comment.

By way of examples, every (smooth) Banach Lie group is Milnor regular \cite[Remark 3.33]{Schmeding}, as are examples like: the Fr\'echet Lie group $Diff(M)$, for $M$ a compact manifold \cite[Example 3.36]{Schmeding}; the merely locally convex $Diff(M)$ (modelled on compactly-supported vector fields on $M$) when $M$ is only assumed finite-dimensional, smooth and paracompact \cite[Corollary 13.7]{Glockner_16}; the smooth mapping groups $C^\infty(M,G)$ where $M$ is a compact manifold and $G$ is a Milnor regular Lie group \cite[Proposition 3.49]{Schmeding}.
The direct limit groups $U(\infty) = \bigcup_{n\geq 1} U(n)$ and the analogous $SU(\infty)$ are Milnor regular and locally convex \cite[Theorem 8.1]{Gloeckner_05}.

The definitions of bi-invariant and primitive forms are identical as in the finite-dimensional case, and the results connecting bi-invariant forms to alternating maps on the Lie algebra also hold, only replacing the linear dual by the \emph{continuous} linear dual everywhere. 
Letting $G$ now denote an arbitrary Milnor regular Lie group, we have:

\begin{itemize}
\item The vector space $\Omega_I^k(G)$ of bi-invariant $k$-forms is isomorphic to $(\bigwedge^k \mathfrak{g}^*)^{\Ad(G)}$;
\item Every bi-invariant $k$-form is closed, and defines an $\RR$-valued continuous Lie algebra $k$-cocycle in the Chevalley--Eilenberg complex;
\item Given a surjective homomorphism $p\colon H\to G$ such that $\mathfrak{h}\to \mathfrak{g}$ is surjective\footnote{This is weaker than the usual definition of submersion for locally convex manifolds, \cite[Definition 1.49 (c)]{Schmeding}}, the pullback map $p^*\colon \Omega_I^k(G) \to \Omega_I^k(H)$ is injective.
\item For $\eta \in (\bigwedge^2\mathfrak{g}^*)^{\Ad(G)}$ the equation $\eta([X,Y],Z) = 0$ (that is, equation (\ref{eq:reduced_2-cocycle_eq})) still holds for all $X,Y,Z\in \mathfrak{g}$.
\end{itemize}

The proofs of these results are the same as in the finite-dimensional case, relying on the following technical lemma.

\begin{lemma}[\cite{Schmeding}, Corollary 3.22]\label{lemma:Lie_alg_lctvs}
The Lie algebra of a Lie group (modelled on a locally convex topological vector space) is itself a locally convex space, and the Lie bracket is continuous.
\end{lemma}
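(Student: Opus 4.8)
The plan is to treat the two assertions separately, since the first is essentially formal while the second carries all the content. For the claim that $\fracg$ is locally convex, I would simply unwind the definition: $\fracg = T_eG$, and as $G$ is a manifold modelled on a locally convex space $E$, any chart around $e$ gives a linear isomorphism $T_eG \xrightarrow{\simeq} E$. First I would fix such a chart sending $e$ to $0$ and transport the locally convex structure of $E$ onto $\fracg$; since transition maps are smooth their differentials are continuous linear isomorphisms, so the resulting topology on $\fracg$ is chart-independent and makes $\fracg$ a locally convex space.

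For continuity of the bracket I would first pin down which construction of $[\cdot,\cdot]$ to use, and the most convenient is to realise the bracket as a second-order Taylor coefficient. Conjugation $c_g\colon h\mapsto ghg^{-1}$ is smooth, with $\Ad_g := T_e c_g$, and differentiating $g\mapsto \Ad_g$ at $e$ yields $\ad = T_e\Ad$ and $[X,Y] := \ad(X)(Y)$; equivalently one reads $[\cdot,\cdot]$ off the second differential at $(e,e)$ of the smooth commutator map $\kappa\colon G\times G\to G$, $(g,h)\mapsto ghg^{-1}h^{-1}$, expressed in a chart. Either way the bracket is extracted from the differentials of a smooth map between locally convex manifolds, which reduces continuity of $[\cdot,\cdot]$ to a continuity property of those differentials.

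The main obstacle is joint continuity: in a general locally convex space a separately continuous bilinear map need not be jointly continuous, so a slot-by-slot argument is not available. The way through is the defining feature of the Bastiani (Michal--Bastiani) calculus on which locally convex manifolds are built, namely that for a smooth $f\colon U\subseteq E\to F$ the iterated differential $d^{(k)}f\colon U\times E^k\to F$ is \emph{continuous} on the whole product. Applying this with $k=2$ to the chart representative of $\kappa$ and freezing the base point $(e,e)$ produces a jointly continuous bilinear map $E\times E\to E$, which I would then identify with the bracket after the standard bookkeeping of extracting the mixed second-order term (the lower-order contributions cancelling in the commutator). The remaining checks---that the vector-field, adjoint, and Taylor-coefficient definitions of $[\cdot,\cdot]$ coincide, and that the chart identification $T_eG\simeq E$ intertwines them---are routine in the Neeb--Gl\"ockner framework and present no real difficulty once the joint continuity of higher differentials is in hand.
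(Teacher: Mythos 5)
The paper does not prove this lemma at all: it is imported verbatim as \cite[Corollary 3.22]{Schmeding}, so there is no internal argument to compare yours against. Your sketch is nonetheless a correct outline of the standard proof in the Michal--Bastiani framework, and it is essentially the argument one finds in the cited reference: the locally convex structure on $T_eG$ is transported from the modelling space via a chart (well-defined because differentials of transition maps are continuous linear isomorphisms), and continuity of the bracket is obtained by reading it off as the mixed second-order term of the chart representative of the commutator map $\kappa(g,h)=ghg^{-1}h^{-1}$. You correctly isolate the one point that is not formal, namely that separate continuity of a bilinear map does not imply joint continuity on a general locally convex space, and you correctly resolve it by invoking the defining property of Bastiani smoothness that the iterated differentials $d^{(k)}f$ are jointly continuous on $U\times E^k$. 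The only caveat worth noting is that your alternative route via $\ad = T_e\Ad$ requires some care, since $GL(\mathfrak{g})$ need not be a Lie group in this generality and one should instead differentiate the smooth map $G\times\mathfrak{g}\to\mathfrak{g}$, $(g,X)\mapsto \Ad_g(X)$ in its first argument; but since you settle on the commutator-map route, this does not affect the argument.
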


As a warmup to a more general statement, we can consider the infinite-dimensional unitary group in the sense of K-theory.
Recall that there is a chain of inclusions of Lie groups
\[
    U(2) \subset U(3) \subset \cdots \subset U(\infty) \subset \U_\mathcal{K} \subset \U
\]
where $U(\infty) := \bigcup_{n\geq 1}U(n)$, and $\U_\cK := \U \cap (1+\cK)$, for $\cK \subset \mathcal{B}(\cH)$ the compact operators on $\cH$. 
The two right-most groups are Banach Lie groups, with Banach Lie algebras $\fracu_\cK \subset \fracu$ respectively. 
Note in particular that $\fracu$ is the Lie algebra associated to the Banach $*$-algebra of skew-adjoint bounded operators on $\cH$, and $\fracu_\cK$ is a 2-sided ideal, the norm-closure of the union $\fracu(\infty) = \bigcup_{n\geq 1} \fracu(n)$ of finite-rank operators.

The result of the previous section is enough to show that if $\omega$ is \emph{any} bi-invariant 2-form on $\U$, then it restricts on $U(\infty)$ to be zero. 
By continuity of $\omega$ on $\U$ and density of $\fracu(\infty)\subset \fracu_\cK$, this then implies that $\omega$ restricts to zero on $\U_\cK$.
However, it doesn't rule out there being a bi-invariant 2-form $\omega'$ on $\U/\U_\cK$ that pulls up to $\U$.
This is especially notable as $\U/\U_\cK$ is a classifying space for $\U_\cK$ (as $\U$ is contractible) and thence for reduced K-theory (eg \cite[\S 7]{BCMMS}). Thus $\pi_2(\U/\U_\cK) \simeq [S^2,\U/\U_\cK] \simeq \widetilde{K}(S^2)\simeq \ZZ$, hence $H^2(\U/\U_\cK,\RR)=\RR$.
We shall see that in contrast to the compact case, this latter cohomology group being non-zero is not sufficient to guarantee the existence of a bi-invariant 2-form.

\begin{remark}
The argument that there is no nonzero bi-invariant 2-form on $U(\infty)$ easily generalises to rule out the existence of nonzero bi-invariant $k$-forms on any Milnor regular Lie group of the form $G_\infty = \bigcup_{n\geq 1}G_n$ with all $G_n$ compact (following \cite{Gloeckner_05}), and such that $H^k(G_n,\RR)=0$ for all $n\gg 1$.
This is because a bi-invariant $k$-form on $G_\infty$ restricts to a bi-invariant $k$-form on $G_n$ for every $n$, and every collection of $k$ tangent vectors at a point in $G_\infty$ live in the tangent space of some $G_n$ for sufficiently large $n$. 
This generalises further to the case of an infinite-dimensional Lie group with merely a dense Lie subgroup of the form $G_\infty$ as above.
\end{remark}

And so we come to the main result about Milnor regular Lie groups.

\begin{theorem}\label{thm:main}
Let $G$ be a connected Milnor regular Lie group, and let $\mathfrak{a} = \mathfrak{g}/\overline{[\mathfrak{g},\mathfrak{g}]}$ be the topological abelianisation of its Lie algebra.
Then there is a vector space isomorphism $\Omega^2_I(G) \simeq \bigwedge^2 \mathfrak{a}^*$.
\end{theorem}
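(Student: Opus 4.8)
The plan is to follow the skeleton of the proof of Theorem~\ref{thm:finite_dim_case}, with two modifications: every dual and exterior power is now taken in the \emph{continuous} sense, and the integration step (step~3), which in finite dimensions reduced to a simply-connected group and integrated the Lie algebra map $q\colon\mathfrak{g}\to\mathfrak{a}$ to a group homomorphism, is replaced by a direct connectedness argument that sidesteps infinite-dimensional covering spaces and the integrability of Lie algebra maps entirely. Since the excerpt already records the isomorphism $\Omega^2_I(G)\simeq(\bigwedge^2\mathfrak{g}^*)^{\Ad(G)}$ and the identity $\eta([X,Y],Z)=0$ for $\eta\in(\bigwedge^2\mathfrak{g}^*)^{\Ad(G)}$, it remains to produce mutually inverse linear maps between $(\bigwedge^2\mathfrak{g}^*)^{\Ad(G)}$ and $\bigwedge^2\mathfrak{a}^*$.

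For the descent map I would first note that $\overline{[\mathfrak{g},\mathfrak{g}]}$ is a closed ideal, so $\mathfrak{a}$ with its quotient topology is locally convex and Hausdorff, and the quotient map $q\colon\mathfrak{g}\to\mathfrak{a}$ is open. Given $\eta\in(\bigwedge^2\mathfrak{g}^*)^{\Ad(G)}$, the identity $\eta([X,Y],Z)=0$ together with skew-symmetry shows $\eta$ vanishes whenever one argument lies in $[\mathfrak{g},\mathfrak{g}]$; since $\eta$ is continuous (Lemma~\ref{lemma:Lie_alg_lctvs} places us in a locally convex setting), it vanishes whenever one argument lies in the closure $\overline{[\mathfrak{g},\mathfrak{g}]}$. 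Hence $\eta=(q\times q)^*\lambda$ for a well-defined alternating form $\lambda$ on $\mathfrak{a}$; continuity of $\lambda$ follows because $q\times q$ is an open continuous surjection, hence a quotient map. This gives an injection $(\bigwedge^2\mathfrak{g}^*)^{\Ad(G)}\into\bigwedge^2\mathfrak{a}^*$, exactly as in Lemma~\ref{lemma:descending_Ad-invariant_2-cocycles}.

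The substantive direction is surjectivity: for every continuous $\lambda\in\bigwedge^2\mathfrak{a}^*$ I must show the pullback $q^*\lambda$ is $\Ad$-invariant. As in~(\ref{eq:ad-invariance_pullback_form}), it suffices to prove $q(\Ad_g X)=q(X)$ in $\mathfrak{a}$ for all $g\in G$, $X\in\mathfrak{g}$. Fix $X$ and consider the smooth map $f\colon G\to\mathfrak{a}$, $f(g)=q(\Ad_g X)$. Every tangent vector at $g_0$ is the initial velocity of a curve $t\mapsto g_0\exp(tW)$ supplied by the Milnor-regular exponential, and $\frac{d}{dt}\big|_{0}\Ad_{g_0\exp(tW)}X=\Ad_{g_0}([W,X])$. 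The key observation is that $\Ad_{g_0}$ is a continuous Lie algebra automorphism, so it carries $[\mathfrak{g},\mathfrak{g}]$ into itself and, being continuous, preserves $\overline{[\mathfrak{g},\mathfrak{g}]}$; therefore $q(\Ad_{g_0}([W,X]))=0$. Thus $df$ vanishes in every direction at every point, so $f$ is locally constant, and as $G$ is connected $f$ is constant, equal to $f(e)=q(X)$. This yields $q(\Ad_gX)=q(X)$ and hence the $\Ad$-invariance of $q^*\lambda$.

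Finally, the descent and pullback maps are visibly mutually inverse, giving $\Omega^2_I(G)\simeq(\bigwedge^2\mathfrak{g}^*)^{\Ad(G)}\simeq\bigwedge^2\mathfrak{a}^*$. I expect the main obstacle to be precisely the replacement of finite-dimensional step~3: in this generality one cannot pass to a universal cover or integrate $q$ to a group homomorphism, so the crux is the differentiation-and-connectedness argument, whose validity rests on (i) $\Ad_{g_0}$ preserving the closed commutator ideal $\overline{[\mathfrak{g},\mathfrak{g}]}$, and (ii) the fact that a smooth map into a locally convex Hausdorff space whose derivative vanishes identically is locally constant, hence constant on a connected manifold (verified by composing with continuous linear functionals and applying the one-variable fundamental theorem of calculus on chart segments).
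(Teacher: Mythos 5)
Your proof is correct, and the crucial step is handled by a genuinely different (and lighter) argument than the paper's. The paper's proof keeps the finite-dimensional skeleton intact: it first reduces to the simply-connected case (its step 1) and then, in step 3, invokes Lie's second theorem for Milnor regular groups to integrate $q\colon\mathfrak{g}\to\mathfrak{a}$ to a group homomorphism $Q\colon G\to\mathfrak{a}$, which requires knowing that $\mathfrak{g}$ is Mackey complete, that the additive group of a Mackey-complete locally convex space is itself Milnor regular, and implicitly that universal covering Lie groups exist in this setting. You instead prove the only fact actually needed, $q(\Ad_g X)=q(X)$, by differentiating $g\mapsto q(\Ad_g X)$ along the curves $g_0\exp(tW)$ supplied by regularity: the derivative is $q(\Ad_{g_0}[W,X])=q([\Ad_{g_0}W,\Ad_{g_0}X])=0$ (the value is a single commutator, so you do not even need continuity of $\Ad_{g_0}$ to land in the closed commutator ideal), and the locally-constant-hence-constant step goes through on a connected locally convex manifold by testing against continuous linear functionals, exactly as you indicate. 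This removes the simple-connectedness reduction and the appeal to Lie II and Mackey completeness entirely, at the price of not producing the homomorphism $Q$ itself --- which the paper reuses later, for instance to reduce the primitivity question to the additive group $\mathfrak{a}$ in Corollary~\ref{cor:no_primitive_2-forms}. Your descent direction (openness of $q\times q$ giving continuity of $\lambda$) matches the paper's step 2.
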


Here $\bigwedge^2 \mathfrak{a}^*$ is the space of continuous alternating bilinear forms on the Mackey-complete locally convex topological vector space $\mathfrak{a}$.
Note that I do not examine here if this isomorphism is a \emph{topological} isomorphism.

\begin{proof}
The proof follows the same strategy as in the finite-dimensional case, except now more care needs to be taken to souce technical theorems in infinite-dimensional Lie theory.
The same numbering is used as in the proof of Theorem~\ref{thm:finite_dim_case}.

\begin{enumerate}
\item The isomorphism $\Omega^2_I(G) \simeq \Omega^2_I(\widetilde{G})$ still holds as $\ker(\widetilde{G}\to G)$ is still central, by connectedness of $\widetilde{G}$.
Thus without loss of generality, we take $G$ simply connected.

\item The equation (\ref{eq:reduced_2-cocycle_eq}) holding means that an $\Ad$-invariant alternating bilinear form $\eta$ on $\mathfrak{g}$ vanishes when either of the arguments is in $[\mathfrak{g},\mathfrak{g}]$, and hence by continuity vanishes when either argument is in the closure of this subspace.
The analogue of Lemma~\ref{lemma:descending_Ad-invariant_2-cocycles} still holds, except now with the \emph{topological} abelianisation $\mathfrak{g}/\overline{[\mathfrak{g},\mathfrak{g}]}$.
There is again a linear injective map $(\bigwedge^2\mathfrak{g}^*)^{\Ad(G)}\to \bigwedge^2\mathfrak{a}^*$.

\item Now here we need to rely on the fact that the Lie algebra of a Milnor regular group is Mackey complete \cite[Remark II.5.3.(b)]{Neeb_06} (see also \cite[Theorem 4, 2)]{Hanusch_22}), and locally convex by Lemma \ref{lemma:Lie_alg_lctvs}, and that the additive group of a Mackey complete lctvs is a Milnor regular group \cite[Example 3.34]{Schmeding}. 
And, further, Lie's second theorem holds for Milnor regular Lie groups \cite[Proposition E.14]{Schmeding}, so that the Lie algebra map $q\colon \mathfrak{g}\to \mathfrak{a}$ integrates (as $G$ is assumed simpy-connected) to a unique Lie group homomorphism $Q\colon G\to \mathfrak{a}$.
Further, the identity (\ref{eq:ad-invariance_pullback_form}) still holds (using eg \cite[Exercise 3.2.11(b)]{Schmeding} applied to $Q$), and so the pullback of a bilinear form $\lambda \in \bigwedge^2\mathfrak{a}^*$ is $\Ad$-invariant.

\item We conclude as in the finite-dimensional case. \qedhere

\end{enumerate}

\end{proof}

We can give a sufficient condition for vanishing of the space of bi-invariant 2-forms in terms of continuous Lie algebra cohomology and the \emph{second} term of the lower central series.

\begin{proposition}\label{prop:main}
Let $G$ be a Milnor regular Lie group whose Lie algebra $\mathfrak{g}$ satisfies
\begin{enumerate}
    \item $H^2_c(\fracg,\RR) = 0$, and 
    \item $[\fracg,[\fracg,\fracg]] \subset [\fracg,\fracg]$ is dense.
\end{enumerate}
Then the space of bi-invariant differential 2-forms vanishes.
\end{proposition}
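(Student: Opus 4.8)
The plan is to combine the cocycle description of bi-invariant 2-forms recorded in the bullet list for Milnor regular groups with the vanishing of $H^2_c(\mathfrak{g},\RR)$ to express any bi-invariant form as a \emph{continuous} coboundary, and then to kill it using the density hypothesis (2) together with continuity. By the identification $\Omega^2_I(G) \simeq (\bigwedge^2\mathfrak{g}^*)^{\Ad(G)}$, a bi-invariant 2-form is the same data as an element $\eta \in (\bigwedge^2\mathfrak{g}^*)^{\Ad(G)}$, which is simultaneously a continuous Lie algebra 2-cocycle and satisfies equation (\ref{eq:reduced_2-cocycle_eq}), namely $\eta([X,Y],Z) = 0$ for all $X,Y,Z \in \mathfrak{g}$. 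The goal is to show that any such $\eta$ vanishes.

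First I would invoke hypothesis (1). Since $\eta$ is a continuous 2-cocycle and $H^2_c(\mathfrak{g},\RR) = 0$, it must be a continuous coboundary: there is a continuous linear functional $\beta \in \mathfrak{g}^*$ with $\eta(X,Y) = -\beta([X,Y])$, the sign being fixed by the Chevalley--Eilenberg convention, under which $\delta^{CE}(\beta)(X,Y) = -\beta([X,Y])$. The essential point is that continuous Lie algebra cohomology produces a \emph{continuous} primitive $\beta$, which is precisely what the density argument below requires.

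Next I would feed this expression into equation (\ref{eq:reduced_2-cocycle_eq}). Substituting $\eta(A,B) = -\beta([A,B])$ into $\eta([X,Y],Z) = 0$ gives $\beta([[X,Y],Z]) = 0$ for all $X,Y,Z \in \mathfrak{g}$, so $\beta$ annihilates $[[\mathfrak{g},\mathfrak{g}],\mathfrak{g}]$, which by antisymmetry of the bracket is exactly the subspace $[\mathfrak{g},[\mathfrak{g},\mathfrak{g}]]$. Hypothesis (2) says this subspace is dense in $[\mathfrak{g},\mathfrak{g}]$, so the continuous functional $\beta$, vanishing on a dense subset, vanishes on the closure and hence on all of $[\mathfrak{g},\mathfrak{g}]$. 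But every value $\eta(X,Y) = -\beta([X,Y])$ is $\beta$ evaluated on an element of $[\mathfrak{g},\mathfrak{g}]$, so $\eta \equiv 0$, as required.

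I expect no serious obstacle here, as the argument is short. The only point needing care is the continuity bookkeeping: one must confirm that the primitive produced by $H^2_c = 0$ really lands in the continuous dual (so that ``$\beta$ vanishes on a dense set $\Rightarrow$ $\beta$ vanishes on its closure'' is legitimate), and that the topology in which $[\mathfrak{g},[\mathfrak{g},\mathfrak{g}]]$ is dense in $[\mathfrak{g},\mathfrak{g}]$ is the subspace topology induced from $\mathfrak{g}$ in which $\beta$ is continuous. Both are automatic from the definition of continuous Lie algebra cohomology and from Lemma~\ref{lemma:Lie_alg_lctvs}, so the proof reduces to the three substitutions above.
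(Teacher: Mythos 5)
Your argument is correct and follows essentially the same route as the paper's own proof: hypothesis (1) exhibits $\eta$ as a continuous coboundary $\eta(X,Y)=\pm\beta([X,Y])$, equation (\ref{eq:reduced_2-cocycle_eq}) forces $\beta$ to annihilate $[\fracg,[\fracg,\fracg]]$, and continuity plus the density hypothesis (2) then kill $\beta$ on all of $[\fracg,\fracg]$, hence $\eta=0$. The only cosmetic difference is a sign convention for the Chevalley--Eilenberg differential, which does not affect the argument.
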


\begin{proof}
Assume to start just that the first condition holds.
Recall that a bi-invariant differential form on $G$ gives a Lie algebra cocycle, and so if the Lie algebra cohomology group vanishes, every 2-cocycle $\eta$ on $\fracg$ is a coboundary, hence of the form $\eta(X,Y) = p([X,Y])$ for some continuous linear functional $p\colon \fracg\to \RR$.

However, from equation (\ref{eq:reduced_2-cocycle_eq}), we have $p([Z,[X,Y]]) =-p([[X,Y],Z]) = -\eta([X,Y],Z) = 0$, for all $X,Y,Z\in \fracg$.
Continuity then implies that $p$ vanishes on $\overline{[\fracg,[\fracg,\fracg]]}$, where this closure is taken inside $[\fracg,\fracg]$.
Thus $p$ arises in fact from a continuous functional on $[\fracg,\fracg]/\overline{[\fracg,[\fracg,\fracg]]}$.
Now, taking also the second assumption the space $\big([\fracg,\fracg]/\overline{[\fracg,[\fracg,\fracg]]}\big)^*$ is trivial, so only possible bi-invariant differential form is $0$.
\end{proof}

We have the following simple characterisation, which holds in practice in several examples below.

\begin{corollary}\label{cor:first}
If $G$ is a connected Milnor regular Lie group whose Lie algebra is topologically perfect then are no non-zero bi-invariant 2-forms on $G$.
\end{corollary}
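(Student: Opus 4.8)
The plan is to derive this immediately from Theorem~\ref{thm:main}, treating the corollary as the degenerate case in which the target of the classifying isomorphism collapses. Since $G$ is connected and Milnor regular, Theorem~\ref{thm:main} applies and gives $\Omega^2_I(G) \simeq \bigwedge^2 \mathfrak{a}^*$, where $\mathfrak{a} = \mathfrak{g}/\overline{[\mathfrak{g},\mathfrak{g}]}$ is the topological abelianisation. So the entire argument reduces to unwinding what the hypothesis of topological perfectness says about $\mathfrak{a}$.

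First I would recall the definition: $\mathfrak{g}$ being topologically perfect means precisely that the commutator ideal is dense, i.e.\ $\overline{[\mathfrak{g},\mathfrak{g}]} = \mathfrak{g}$. Substituting this directly into the definition of $\mathfrak{a}$ yields $\mathfrak{a} = \mathfrak{g}/\overline{[\mathfrak{g},\mathfrak{g}]} = \mathfrak{g}/\mathfrak{g} = 0$. Then $\bigwedge^2 \mathfrak{a}^*$ is the space of continuous alternating bilinear forms on the zero space, which is itself $0$. Composing with the isomorphism of Theorem~\ref{thm:main} gives $\Omega^2_I(G) \simeq 0$, as required.

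There is essentially no obstacle here, since the heavy lifting — integrating $q\colon \mathfrak{g}\to\mathfrak{a}$ to a group homomorphism via Lie's second theorem for Milnor regular groups, and verifying $\Ad$-invariance of pullbacks — has already been carried out in Theorem~\ref{thm:main}. The only point worth a word of care is that one uses the \emph{topological} abelianisation (the quotient by the \emph{closure} of the commutator ideal) rather than the algebraic one; this is exactly the version appearing in step~2 of the proof of Theorem~\ref{thm:main}, where continuity of the $\Ad$-invariant form forces it to vanish on $\overline{[\mathfrak{g},\mathfrak{g}]}$. One could alternatively route through Proposition~\ref{prop:main}, but that would demand separately verifying both its hypotheses (the vanishing of $H^2_c(\mathfrak{g},\RR)$ and density of the second term of the lower central series), so the direct appeal to Theorem~\ref{thm:main} is the cleaner path.
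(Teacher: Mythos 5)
Your proposal is correct and is essentially the paper's own argument: the paper likewise notes the corollary is immediate from Theorem~\ref{thm:main} because topological perfectness means $\overline{[\mathfrak{g},\mathfrak{g}]} = \mathfrak{g}$, so $\mathfrak{a}=0$ and hence $\bigwedge^2\mathfrak{a}^*=0$. Your added remark that routing through Proposition~\ref{prop:main} would be less clean is accurate but not needed.
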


The proof is immediate from Theorem~\ref{thm:main}, because it means $\overline{[\mathfrak{g},\mathfrak{g}]} = \mathfrak{g}$.
For instance, if $\fracg$ is non-abelian and topologically simple (there are no nonzero proper closed ideals), the vanishing result is immediate.

And we also put the final nail in the coffin in the dream of a primitive curvature 2-form.

\begin{corollary}\label{cor:no_primitive_2-forms}
The space of primitive 2-forms on a Milnor regular Lie group $G$ always vanishes.
\end{corollary}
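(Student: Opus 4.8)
The plan is to reduce, exactly as in Corollary~\ref{cor:no_primitive_2-forms_fin_dim}, to an elementary computation on the abelianisation, while checking that every ingredient survives in the locally convex setting. First I would record that a primitive $2$-form is automatically bi-invariant: restricting the primitivity equation to the slices $\{g\}\times G$ and $G\times\{g\}$ gives $L_g^*\eta=\eta=R_g^*\eta$, an argument that is purely formal and dimension-independent. Hence $\Omega^2_p(G)\subseteq\Omega^2_I(G)$, and since bi-invariant forms are determined by their value on $\mathfrak{g}$, with the restriction $\Omega^2_I(G)\to\Omega^2_I(G_0)$ to the identity component injective, it suffices to treat connected $G$ and then pass to the general case by restriction to $G_0$. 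For connected $G$, Theorem~\ref{thm:main} presents any $\eta\in\Omega^2_I(G)$ as $\eta=Q^*\lambda$, where $Q\colon G\to\mathfrak{a}$ is the homomorphism integrating the quotient $q\colon\mathfrak{g}\to\mathfrak{a}=\mathfrak{g}/\overline{[\mathfrak{g},\mathfrak{g}]}$ and $\lambda\in\bigwedge^2\mathfrak{a}^*$ is a continuous alternating form on the additive group $\mathfrak{a}$; in particular $\eta_e=q^*\lambda$ on $\mathfrak{g}$.

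Next I would show that primitivity of $\eta$ forces $\lambda$ to be primitive on $\mathfrak{a}$. Because $Q$ is a group homomorphism it intertwines $m,\pr_1,\pr_2$ on $G$ with those on $\mathfrak{a}$, so the primitivity operator $\delta(\beta)=\pr_1^*\beta+\pr_2^*\beta-m^*\beta$ obeys the naturality identity $\delta(Q^*\lambda)=(Q\times Q)^*\delta(\lambda)$ in $\Omega^2(G\times G)$. Since $q$ is surjective, $Q\times Q\colon G\times G\to\mathfrak{a}\times\mathfrak{a}$ is surjective on tangent spaces, so its pullback is injective by the pointwise-lifting argument of Lemma~\ref{lemma:surj_subm_pullback_fin_dim}, and $\delta(\eta)=0$ yields $\delta(\lambda)=0$. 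Equivalently, and most concretely, one can simply evaluate $\delta(\eta)$ at the identity $(e,e)$, where the differential of $m$ is addition and $\eta_e=q^*\lambda$, obtaining
\[
(\delta\eta)_{(e,e)}\big((X,X'),(Y,Y')\big) = -\lambda(q(X),q(Y')) - \lambda(q(X'),q(Y)).
\]

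It then remains to observe that a primitive $\lambda$ must vanish. Setting $X'=0$ and $Y=0$ above, and using that $q$ is surjective so that $q(X)$ and $q(Y')$ range over all of $\mathfrak{a}$, reduces primitivity to $\lambda(u,v')=0$ for all $u,v'\in\mathfrak{a}$, that is $\lambda=0$. This is exactly the coordinate identity $dx_1\wedge dy_1+dx_2\wedge dy_2-d(x_1+x_2)\wedge d(y_1+y_2)=-dx_1\wedge dy_2-dx_2\wedge dy_1$ from the finite-dimensional proof, which only ever involved the abstract bilinear form and so needs no modification in infinite dimensions. Hence $\eta=Q^*\lambda=0$, and for arbitrary Milnor regular $G$ a primitive form restricts to a primitive, hence zero, form on the identity component $G_0$; injectivity of the restriction on bi-invariant forms then gives $\Omega^2_p(G)=0$.

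The step I expect to need the most care is the implication $\delta(\eta)=0\Rightarrow\delta(\lambda)=0$, where the finite-dimensional equivalence between primitivity upstairs and downstairs must be re-justified: it relies on $Q$ being a genuine smooth homomorphism (Lie's second theorem for Milnor regular groups, already invoked in Theorem~\ref{thm:main}) with surjective differential, and on the tangent-lifting argument of Lemma~\ref{lemma:surj_subm_pullback_fin_dim} continuing to hold for maps that are merely surjective on tangent spaces. Everything after that — the abelian computation and the reduction from $G$ to $G_0$ — is routine and identical in spirit to the finite-dimensional case.
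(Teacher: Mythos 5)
Your proof is correct and follows essentially the same route as the paper: reduce to the connected, simply-connected case, use Theorem~\ref{thm:main} to write every bi-invariant $2$-form as $Q^*\lambda$ for $\lambda$ an alternating form on the abelianisation $\mathfrak{a}$, and then show by the computation $(\delta\eta)_{(e,e)}\bigl((X,X'),(Y,Y')\bigr)=-\lambda(q(X),q(Y'))-\lambda(q(X'),q(Y))$ that a nonzero $\lambda$ cannot be primitive. The only cosmetic difference is that the paper restricts $\lambda$ to a $2$-dimensional subspace $V\subset\mathfrak{a}$ to reuse the finite-dimensional coordinate identity, whereas you evaluate $\delta(\eta)$ directly at the identity, which is if anything slightly cleaner (and wisely sidesteps any worry about $Q$ being surjective on points when invoking injectivity of $(Q\times Q)^*$).
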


\begin{proof}
As in the finite-dimensional case, we can reduce to a connected and simply-connected Lie group.
Further, we only need consider the abelian case, since that is where all the bi-invariant 2-forms arise.
And then, assuming the topological abelianisation $\mathfrak{a}$ of $\mathfrak{g}$ is at least 2-dimensional, we can find some $\eta\in \bigwedge^2 \mathfrak{a}^*$ and a 2-dimensional subspace $V\subset \mathfrak{a}$ with $\eta$ non-zero on it.
A 2-form $\eta$ is primitive if $\delta(\eta) = \pr_1^*\eta + \pr_2^*\eta - a^*\eta = 0$, for $a$ the addition map.
We can consider the commutative square
\[
    \xymatrix{
        \Omega^2_I(\mathfrak{a}) \ar[r]^\delta \ar[d] & \Omega^2_I(\mathfrak{a}^2) \ar[d]\\
        \Omega^2_I(V) \ar[r]_\delta & \Omega^2_I(V^2)
    }
\]
where the vertical, surjective maps are restrictions.
Then the same calculation as in Corollary~\ref{cor:no_primitive_2-forms_fin_dim}, performed with basis vectors of $V$, shows that $\delta(\eta)$ cannot vanish in $\Omega^2_I(\mathfrak{a}^2)$, as the image in $\Omega^2_I(V^2)$ in non-zero.
Thus no bi-invariant 2-form is primitive.
\end{proof}

\section{Infinite-dimensional examples}\label{sec:inf_dim_examples}

From the previous corollary, we can immediately conclude the following:

\begin{example}\label{example:none_on_PU}
There are no nonzero bi-invariant 2-forms on $\U$, $\PU$ or $\U/\U_\cK$.

Let us first show it for $\U$, using the fact that $\fracu=[\fracu,\fracu]$ as noted by Ayupov \cite[Remark, page 179]{Ayupov_96} (following \cite{Halmos_54}) that every skew-adjoint operator is the sum of four commutators of skew-adjoint operators.
Then a bi-invariant 2-form on $\PU$ or $\U/\U_\cK$ pulls back to give a bi-invariant 2-form on $\U$, which necessarily vanishes.
For $\U/\U_\cK$ we can even verify the result directly, since $\fracu/\fracu_\cK$ is simple. 
\end{example}

As noted above, this means we see that non-zero $H^2(\U/\U_\cK,\RR)$ is now insufficient to ensure the existence of bi-invariant 2-forms.

\begin{example}
For a compact smooth boundaryless manifold $M$, the Lie algebra $Vect(M)$ of smooth vector fields is the Lie algebra of the Milnor regular group $Diff(M)$. 
Further, $Vect(M)$ is simple (eg \cite[Theorem 1.4.3]{Banyaga}), and so Corollary \ref{cor:first} applies.
\end{example}

The Lie algebra $Vect_c(M)$ of \emph{compactly-supported} smooth vector fields on a non-compact smooth finite-dimensional manifold $M$ is also perfect \cite[Theorem 1.4.3]{Banyaga}. 
As the diffeomorphism group $Diff(M)$ is regular in this case too, and its Lie algebra is $Vect_c(M)$, one can use the proposition for full diffeomorphism groups of finite-dimensional (paracompact) manifolds as well.

\begin{example}\label{example:symp}
Let $(M,\omega)$ be a compact symplectic manifold. Then the Lie algebra $Vect_{Ham}(M,\omega)$ of Hamitonian vector fields is perfect, and this is the commutator ideal of the Lie algebra $Vect(M,\omega)$ of all symplectic vector fields.
Further, these two Lie algebras correspond to the regular Lie groups $Ham(M,\omega)$ and $Symp(M,\omega)$ respectively (Hamiltonian symplectomorphisms, and all symplectomorphisms, see eg \cite[Remark V.2.14(d) and Theorem III.3.1]{Neeb_06}), and so condition 2 of Proposition \ref{prop:main} holds for $Symp(M,\omega)$. 
Moreover Corollary~\ref{cor:first} applies outright to $Ham(M,\omega)$, which thus has no non-zero bi-invariant 2-forms.

In fact we can say more: there is an isomorphism $H^1_{dR}(M)\xrightarrow{\simeq} H^2_c(Ham(M,\omega),\RR)$ \cite[Theorem 2.3]{Janssens-Vizman_16}, and there is a short exact sequence of Lie algebras
\[
    0\to Vect_{Ham}(M,\omega) \to Vect(M,\omega) \to H^1_{dR}(M) \to 0
\]
where the quotient is considered as an abelian Lie algebra. 
In the context of Theorem \ref{thm:main} we have that the topological abelianisation of $Vect(M,\omega)$ is $H^1_{dR}(M)$, and so $\Omega^2_I(Symp(M,\omega)) \simeq \bigwedge^2 H^1_{dR}(M)^*$, which is finite-dimensional.
\end{example}

\begin{example}\label{example:vol}
The Lie algebra $Vect(M,\mu)$ of divergence-free vector fields on a compact manifold $M$ equipped with a volume form $\mu$ has perfect commutator ideal \cite{Lichnerowicz_74}, and is the Lie algebra of the Milnor regular Lie group $Diff(M,\mu)$ of volume-preserving diffeomorphisms (see eg \cite[Theorem III.3.1]{Neeb_06} for the regularity result). Further, the commutator ideal $[Vect(M,\mu),Vect(M,\mu)]$ is the subalgebra $Vect_{ex}(M,\mu)$ of \emph{exact} divergence-free vector fields.
A result of Roger \cite{Roger_95} (see \cite[Theorem 2.1]{Janssens-Vizman_16}) states that there is an isomorphism $H^2_{dR}(M) \xrightarrow{\simeq} H^2_c(Vect_{ex}(M,\mu),\RR)$. 

Since the exact divergence-free vector fields are the Lie algebra of the Fr\'echet Lie group $Diff_{ex}(M,\mu)$ of exact volume-preserving diffeomorphisms, and this group is Milnor regular (\cite[Theorem 8.5.2]{Omori_74}, using the fact ILH Lie groups are Milnor regular \cite[\S6]{KMOY_82}) then Proposition \ref{prop:main} applies. 
Thus there are no non-zero bi-invariant 2-forms on $Diff_{ex}(M,\mu)$.
If $M$ is $n$-dimensional there is a short exact sequence of Lie algebras
\[
    0 \to Vect_{ex}(M,\mu) \to Vect(M,\mu) \to H^{n-1}_{dR}(M)\to 0,
\]
making $H^{n-1}_{dR}(M)$ the topological abelianisation of $Vect(M,\mu)$, and hence that $\Omega^2_I(Diff(M,\mu)) \simeq \bigwedge^2 H^{n-1}_{dR}(M)^*$, which is again finite-dimensional.

Since the geometry of $Diff(M,\mu)$ is related to the study of the flow of an incompressible fluid on $M$, it might be interesting to know what is the relation between the existence of a bi-invariant 2-form on $Diff(M,\mu)$  and the fluid dynamics.
\end{example}

As a further result, consider the construction of a central extension $\widehat{G}\to G$ of (Fr\'echet) Lie groups\footnote{It is assumed that $G$ is simply-connected, otherwise one needs a differential character on it, not just a closed 2-form} given in \cite[\S 3]{Mur_Stev03}, from differential form data $(\alpha,R)$. 
Here $R$ a closed, $2\pi i$-integral 2-form on $G$, $\alpha$ a 1-form on $G^2$, and they satisfy
\begin{align*}
 \delta(R) &:= \pr_1^*R+\pr_2^*R - m^*R = d\alpha\\
\delta(\alpha)&:= \pr_{23}^*\alpha - (m\times \id)^*\alpha + (\id\times m)^*\alpha - \pr_{12}^*\alpha = 0
 \end{align*}
The 1-form $\alpha$ measures the failure of a connection 1-form $A$ with curvature $R$ to be primitive: the pullback of $\alpha$ to $\widehat{G}^2$ is $\delta(A)$.

Example \ref{example:none_on_PU} implies that in order to reconstruct $\U\to \PU$ (or indeed any nontrivial central extension of $\PU$) from data relating to $\PU$ alone, one must find a nonzero 1-form $\alpha$ on $\PU^2$ with $d\alpha = \delta(R)$, where $R$ is any 2-form on $\PU$ representing $c_1(\U)$. 
More generally, Corollary~\ref{cor:no_primitive_2-forms} shows there is always a nontrivial obstruction to the flatness of $\alpha$ for central extensions of Milnor regular Lie groups, for instance the well-known central extension of $Ham(M,\omega)$ by the quantomorphism group \cite{Kostant}, and the Ismagilov central extensions of $Diff_{ex}(M,\mu)$ associated to integral 2-forms on $M$ \cite[\S25.3]{Ismagilov_96}.

\printbibliography
\end{document}